\documentclass[12pt]{amsart}
\usepackage{amsmath}
\usepackage{amscd}
\usepackage{amssymb}
\usepackage{amsfonts}
\usepackage{amsthm}
\usepackage{enumerate}
\usepackage{hyperref}
\usepackage{color}
\usepackage{graphicx}
\usepackage{amscd,amsmath,amsthm,amssymb}
\usepackage{mathtools}
\usepackage{subfig}
\usepackage[margin=2cm]{geometry}

\newcommand{\cG}{\mathcal{G}}

\renewcommand{\qedsymbol}{$\square$}

\def\opn#1#2{\def#1{\operatorname{#2}}} 
\opn\chara{char} \opn\length{\ell} \opn\pd{pd} \opn\rk{rk}
\opn\projdim{proj\,dim} \opn\injdim{inj\,dim} \opn\rank{rank}
\opn\depth{depth} \opn\grade{grade} \opn\height{height}
\opn\embdim{emb\,dim} \opn\codim{codim}

\opn\Tr{Tr} \opn\bigrank{big\,rank}
\opn\superheight{superheight}\opn\lcm{lcm}
\opn\trdeg{tr\,deg}
	\opn\reg{reg} \opn\lreg{lreg} \opn\ini{in} \opn\lpd{lpd}
	\opn\size{size} \opn\sdepth{sdepth}
	\opn\link{link}\opn\fdepth{fdepth}\opn\lex{lex}\opn\dist{dist}
	\opn\div{div} \opn\Div{Div} \opn\cl{cl} \opn\Cl{Cl}
	\opn\Spec{Spec} \opn\Supp{Supp} \opn\supp{supp} \opn\Sing{Sing}
	\opn\Ass{Ass} \opn\Min{Min}\opn\Mon{Mon}
	\opn\Ann{Ann} \opn\Rad{Rad} \opn\Soc{Soc}
	\opn\Im{Im} \opn\Ker{Ker} \opn\Coker{Coker} \opn\Am{Am}
	\opn\Hom{Hom} \opn\Tor{Tor} \opn\Ext{Ext} \opn\End{End}
	\opn\Aut{Aut} \opn\id{id}
	
	\opn\nat{nat}
	\opn\pff{pf}
	\opn\Pf{Pf} \opn\GL{GL} \opn\SL{SL} \opn\mod{mod} \opn\ord{ord}
	\opn\Gin{Gin} \opn\Hilb{Hilb}\opn\sort{sort}
	\opn\WHom{WHom}
	\opn\aff{aff} \opn
	\con{conv} \opn\relint{relint} \opn\st{st}
	\opn\lk{lk} \opn\cn{cn} \opn\core{core} \opn\vol{vol}
	\opn\link{link} \opn\star{star}\opn\lex{lex}\opn\set{set}
	\opn\gr{gr}
	
	\def\pot#1#2{#1[\kern-0.28ex[#2]\kern-0.28ex]}

	\opn\dirlim{\underrightarrow{\lim}}
	\opn\inivlim{\underleftarrow{\lim}}
	\let\to=\rightarrow
	
	\def\Implies{\ifmmode\Longrightarrow \else
		\unskip${}\Longrightarrow{}$\ignorespaces\fi}
	\def\implies{\ifmmode\Rightarrow \else
		\unskip${}\Rightarrow{}$\ignorespaces\fi}
	\def\iff{\ifmmode\Longleftrightarrow \else
		\unskip${}\Longleftrightarrow{}$\ignorespaces\fi}

	\let\:=\colon
	\let\epsilon\varepsilon
	\let\kappa=\varkappa
	\def\qed{\ifhmode\textqed\fi
		\ifmmode\ifinner\quad\qedsymbol\else\dispqed\fi\fi}
	\def\textqed{\unskip\nobreak\penalty50
		\hskip2em\hbox{}\nobreak\hfil\qedsymbol
		\parfillskip=0pt \finalhyphendemerits=0}
	\def\dispqed{\rlap{\qquad\qedsymbol}}
	
	\opn\dis{dis}
	\def\pnt{{\raise0.5mm\hbox{\large\bf.}}}
	
	\opn\Lex{Lex}
	
        \newtheorem{Theorem}{Theorem}[section]
	\newtheorem{Lemma}[Theorem]{Lemma}
	\newtheorem{Corollary}[Theorem]{Corollary}
	\newtheorem{Proposition}[Theorem]{Proposition}
	\newtheorem{Remark}[Theorem]{Remark}
	
	\newtheorem{Example}[Theorem]{Example}
	
	\newtheorem{Definition}[Theorem]{Definition}
	\newtheorem{Problem}[Theorem]{Problem}

\begin{document}
   
\title[Ideals of weak graph homomorphisms]{An algebraic study of ideals of weak graph homomorphisms}

\author[F. Navarra]{Francesco Navarra}      
  \address[Francesco Navarra]{Sabanci University, Faculty of Engineering and Natural Sciences, Orta Mahalle, Tuzla 34956, Istanbul, Turkey}	
\email{francesco.navarra@sabanciuniv.edu}

\author[A. A. Qureshi]{Ayesha Asloob Qureshi}      
   \address[Ayesha Asloob Qureshi]{Sabanci University, Faculty of Engineering and Natural Sciences, Orta Mahalle, Tuzla 34956, Istanbul, Turkey}	
\email{aqureshi@sabanciuniv.edu, ayesha.asloob@sabanciuniv.edu}

\author[S. A. Seyed Fakhari]{Seyed Amin Seyed Fakhari}      
   \address[Seyed Amin Seyed Fakhari]{Departamento de Matem\'aticas, Universidad de los Andes, Bogot\'a, Colombia}	
\email{s.seyedfakhari@uniandes.edu.co}

	\keywords{Castelnuovo-Mumford regularity, Cohen-Macaulay, Ideal of weak graph homomorphisms, Linear resolution, Projective dimension, Unmixed}

    \subjclass[2020]{13F55, 05E40, 13D02}

\begin{abstract}
Let $G$ and $H$ be finite simple graphs and assume that either both are undirected or both are directed. We introduce and study the ideal of weak graph homomorphisms $I_{G\to H}$. We characterize all graphs $G$ and $H$ for which every (equivalently, some) power of $I_{G\to H}$ has a linear resolution. Moreover, unmixedness, Cohen-Macaulayness, projective dimension and Castelnuovo-Mumford regularity of these ideals are studied.
\end{abstract}
\maketitle

\section*{Introduction}
The study of monomial ideals and their powers is an active area of research in Combinatorial Commutative Algebra. The general idea is to construct a bridge between algebraic and homological properties of these ideals and a corresponding combinatorial object. Among the most interesting combinatorial  objects are graphs and partially ordered sets (posets). Given any finite poset $P$, in \cite{HH}, Herzog and Hibi defined a squarefree monomial ideal $H_P$ and studied its properties. This ideal is now called the Hibi ideal of $P$. Among all results it is shown in \cite{HH} that any power of $H_P$ has a linear resolution. Moreover, in the same paper, Herzog and Hibi used $H_P$ to  characterize Cohen-Macaulay edge ideals of bipartite graphs. A generalization of Hibi ideals is studied in \cite{EHM}.

Given two finite posets $(P,\leq_P)$ and $(Q, \leq_Q)$, the ideal of poset homomorphisms of $P$ and $Q$ is introduced in \cite{fgh} as follows. Without loss of generality, we may assume that $P$ and $Q$ are posets on $[n]:=\{1,\dots,n\}$ and $[m]:=\{1,\dots,m\}$, respectively. An order preserving map from $P$ to $Q$ is called a {\it poset homomorphism}. In other words, a function $\phi: P\to Q$ is a poset homomorphism if $\phi(x)\leq_Q \phi(y)$ for any pair of elements $x,y\in P$ with $x\leq _P y$. Let $\Hom (P,Q)$ denote the set of all poset homomorphisms from $P$ to $Q$. The {\it ideal of poset homomorphisms} $L(P,Q)$ is a monomial ideal of $S:=K[x_{ij}: i\in [n], j\in [m]]$ ($K$ is a field) which is generated by all the monomials $m_{\phi}:=\prod_{i=1}^nx_{i\phi(i)}$, where $\phi\in \Hom(P,Q)$. This definition extends the notion of generalized Hibi ideals. The ideal of poset homomorphisms is further studied in \cite{AFN, AFN2, HQS, kkm, LW}.

Motivated by the ideal of poset homomorphisms, the goal of this project is to introduce and study a similar ideal associated to a pair of simple graphs. Recall that given two finite simple graphs $G$ and $H$, a map $\phi: V(G)\to V(H)$ is a {\it graph homomorphism} if $\{\phi(x),\phi(y)\}\in E(H)$, for any edge $\{x,y\}\in E(G)$. However, this definition is not really compatible with that of poset homomorphisms. The difference is that for a poset homomorphism $\phi: P\to Q$ and distinct elements $x,y\in P$ with $x\leq_P y$, it is allowed that $\phi(x)=\phi(y)$. However, for a graph homomorphism $\phi$ from $G$ to $H$, if $\{x,y\}\in E(G)$, then by definition, $\phi(x)$ and $\phi(y)$ cannot coincide. To avoid this inconsistency, we relax our setup by considering weak graph homomorphisms (which is also a standard notion in graph theory, \cite[Definition 1.4.3]{K}). The precise definition is as follows. Let $G$ and $H$ be simple graphs on vertex sets $[n]$ and $[m]$, respectively. A
{\it weak homomorphism} from $G$ to $H$ is a map
$\phi:V(G)\to V(H)$ such that for every $\{x,y\}\in E(G)$, either
$\phi(x)=\phi(y)$ or $\{\phi(x),\phi(y)\}\in E(H)$. The set of all weak homomorphisms from $G$ to $H$ will be denoted by $\WHom(G,H)$. Set $S:=K[x_{ij}: i\in[n],\, j\in[m]]$. For each $\phi\in\WHom(G,H)$, define
\[
u_\phi=\prod_{i=1}^{n}x_{i\phi(i)}.
\]
The {\it ideal of weak graph homomorphisms from $G$ to $H$} is
\[
I_{G\to H}=(u_\phi:\phi\in\WHom(G,H)).
\]
The code for computing $I_{G\to H}$ is provided in \cite{N}. 

As the first main result, in Theorem \ref{Theorem:linres}, we characterize all graphs $G$ and $H$ for which every (equivalently, some) power of $I_{G\to H}$ has a linear resolution. It turns out that this is the case if and only if  either $E(G)=\emptyset$ or $H$ is a complete graph. Moreover, the same theorem shows that having a linear resolution and being linearly related are equivalent for ideals of weak graph homomorphisms. Next, we study unmixedness of ideals of weak graph homomorphisms. We prove in Theorem \ref{theorem:unmixed} that $I_{G\to H}$ is unmixed if and only if either $E(G)=\emptyset$ or $H$ is a disjoint union of complete graphs. Using this result, in Theorem \ref{thm:CM+CI}, we characterize all graphs $G$ and $H$ for which $I_{G\to H}$ is Cohen--Macaulay. Also, we prove in Proposition \ref{symbord} that if $I_{G\to H}$ is unmixed, then for each integer $k\geq 1$, the $k$th ordinary and symbolic powers of $I_{G\to H}$ coincide. In the last part of Section \ref{sec1}, we focus on homological invariants of ideals of weak graph homomorphisms and their (symbolic) powers. In Theorem \ref{thm:regundir}, we determine lower bounds for the projective dimension and the Castelnuovo-Mumford regularity of ordinary and symbolic powers of ideals of weak graph homomorphisms. Furthermore, in Corollary \ref{regstar}, we compute the exact value of $\reg I_{G\to H}$ when $G$ is a complete graph and $H$ is a star graph.

In Section \ref{sec2}, we study the ideal of weak graph homomorphisms of directed graphs (see Definition \ref{defdir}). As the first main result of that section, in Theorem \ref{thm:directedlinres}, we characterize when all (equivalently, some) powers of ideal of weak graph homomorphisms of directed graphs have linear resolutions. It turns out that for directed graphs $G$ and $H$ with $|V(H)|\geq 2$, the ideal $I_{G\to H}^k$ has a linear resolution if and only if either $E(G)=\emptyset$ or $G$ is an acyclic graph and $H$ is an acyclic complete graph. Note that a directed graph is said to be {\it acyclic} if it has no directed cycle as a subgraph. In the same theorem, we also prove that having a linear resolution and being linearly related are equivalent for $I_{G\to H}^k$. In Theorem \ref{regdirect}, we provide lower bounds for the projective dimension and the Castelnuovo-Mumford regularity of ordinary and symbolic powers of ideals of weak graph homomorphisms of directed graphs. The next goal of Section \ref{sec2} is to study unmixedness of ideals of weak graph homomorphisms. However, in general, it looks difficult to characterize all directed graphs $G$ and $H$ for which $I_{G\to H}$ is unmixed. So, we study this question only in a special case. Indeed, let $D_m$ denote the directed graph whose underlying graph is the complete graph on $[m]$ and $(i,j)\in E(G)$ if and only if $i< j$. We prove in Theorem \ref{unmixdir} that for any directed graph $G$, the ideal $I_{G\to D_m}$ is unmixed. Next, in Theorem \ref{pddirected}, we determine the projective dimension of $I_{G\to D_m}$. As a consequence, in Corollary \ref{CMdir}, we characterize all graphs $G$ for which $I_{G\to D_m}$ is Cohen-Macaulay. The last result of this paper, Theorem \ref{depthdecr} says that when $G$ and $H$ are directed graphs such that $I_{G\to H}$ is unmixed, then the depth function of symbolic powers of $I_{G\to H}$ is nonincreasing. 


\section{Preliminaries} \label{sec0}

In this section, we provide the definitions and the basic facts which will be used in the next section.

Let $G$ be a graph with vertex set $V(G)=[n]$ which can be directed or undirected. Then $G$ is called {\it totally disconnected} if $E(G)=\emptyset$. A subset $A\subseteq [n]$ is called an {\it independent set} of $G$ if there is no edge between the vertices in $A$. The size of the largest independent set of $G$ is the {\it independence number} of $G$, denoted by $\alpha(G)$. Two vertices $i,j\in V(G)$ are said to be neighbors if there is an edge of $G$ which is incident to both of these vertices. The set of all neighbors of $i\in V(G)$ is denoted by $N_G(i)$. Also, set $N_G[i]:=N_G(i)\cup\{i\}$. The graph obtained from $G$ by deleting a subset $U\subseteq V(G)$ is denoted by $G\setminus U$. When $U=\{i\}$ is a singleton, we write $G\setminus i$ instead of $G\setminus \{i\}$.

The complete undirected graph on $n$ vertices is denoted by $K_n$. The {\it star graph} $K_{1,n}$ is the undirected graph on $[n+1]$ with edge set $\big\{\{i,n+1\}: i=1, \ldots, n\big\}$.

Let $G$ be a directed graph on $[n]$ and suppose $i,j\in [n]$. A sequence$$i=v_1,e_1,v_2, e_2, \ldots, v_k, e_k, v_{k+1}=j$$of vertices and edges of $G$ is called a {\it directed walk} from $i$ to $j$ if for each $\ell=1, \ldots, k$, one has $e_{\ell}=(v_{\ell},v_{\ell+1})$. If the vertices of a directed walk are all distinct, then it is called a {\it directed path}. We say that a directed graph is {\it connected} if its underlying graph is connected.

Let $K$ be a field and $R= K[x_1,\ldots,x_t]$  be the
polynomial ring in $t$ variables over $K$. Suppose that $M$ is a graded $R$-module with minimal free resolution
$$0  \longrightarrow \cdots \longrightarrow  \bigoplus_{j}R(-j)^{\beta_{1,j}(M)} \longrightarrow \bigoplus_{j}R(-j)^{\beta_{0,j}(M)}   \longrightarrow  M \longrightarrow 0.$$
The {\it Castelnuovo--Mumford regularity} (or simply, regularity) and the {\it projective dimension} of $M$ are defined as
$$\reg M=\max\{j-i\mid \beta_{i,j}(M)\neq0\} \ \ \ \ \ \ {\rm and} \ \ \ \ \ \ \projdim M=\max\{i\mid \beta_{i,j}(M\neq 0 \ {\rm for \ some} \ j\}.$$

Let $I$ be a monomial ideal of $R$. The unique set of minimal monomial generators of $I$ will be denoted by $\cG(I)$. The LCM {\it lattice} of $I$ is the lattice of the least common multiples of all subsets of $\cG(I)$ (including empty set) ordered by divisibility. We know from \cite[Theorem 2.1]{gpw} that if two monomial ideals have the same LCM lattice, then they have the same projective dimension.

Assume that $I$ is a monomial ideal generated in a single degree $d$. Then we say that

$\bullet$ $I$ has a {\it linear resolution} if $\reg I=d$;

$\bullet$ $I$ is {\it linearly related} if $\beta_{1,j}(I)=0$, for each integer $j\neq d+1$;

$\bullet$ $I$ is a {\it polymatroidal ideal} if for any pair monomials $u=x_1^{a_1}\ldots x_t^{a_t}$ and $v = x_1^{b_1}\ldots x_t^{b_t}$ belonging to $\cG(I)$
and for every $i$ with $a_i > b_i$, there is an integer $j$ with $a_j < b_j$ such that
$x_j(u/x_i)\in \cG(I)$;

$\bullet$ $I$ is a {\it matroidal ideal} if it is a squarefree polymatroidal ideal;

$\bullet$ $I$ is a {\it transversal polymatroidal ideal} if it is a product of monomial prime ideals.

Let $I$ be a monomial ideal (which is not necessarily generated in a single degree). Then $I$ has {\it linear quotients} if there is a linear order $u_1\prec u_2\prec \cdots \prec u_k$ on $\cG(I)$ such that for each $i=2, \ldots, k$, the colon ideal $(u_1, \ldots, u_{i-1}):u_i$ is generated by a subset of variables. It is a standard fact that if a monomial ideal $I$ is generated in a single degree and has linear quotients, then it admits a linear resolution (see e.g., \cite[Proposition 8.2.1]{HHbook}).

A monomial ideal $I$ of $R$ is called {\it weakly polymatroidal} if for every two monomials $u=x_1^{a_1}\ldots x_t^{a_t}$ and $v=x_1^{b_1}\ldots x_t^{b_t}$ in $\cG(I)$ with $a_1=b_1, \ldots, a_{r-1}=b_{r-1}$ and $a_r > b_r$ for some $r$, there exists $s > r$ such that $x_r(v/x_s)\in I$. It is well-known that any weakly polymatroidal ideal has linear quotients (see e.g., \cite[Theorem 12.7.2]{HHbook}).

Let $I$ be a squarefree monomial ideal of $R$ and suppose that $I$ has the irredundant primary decomposition $I =\mathfrak{p}_1\cap\cdots\cap\mathfrak{p}_r$, where every $\mathfrak{p}_i$ is an ideal of $R$ generated by a subset of the variables. For each integer $k\geq 1$, the $k$th {\it symbolic power} of $I$ is the ideal$$I^{(k)}=\mathfrak{p}_1^k\cap\cdots\cap\mathfrak{p}_r^k.$$

A monomial ideal $I$ of $R$ is called {\it unmixed} if its associated primes have the same height. We say that $I$ is {\it Cohen-Macaulay} if the ring $R/I$ is Cohen-Macaulay, i.e., $\depth R/I=\dim R/I$. Moreover, $I$ is a {\it complete intersection} if $|\cG(I)|=\height I$. It is well-known that any complete intersection ideal is Cohen-Macaulay, and any Cohen-Macaulay ideal is unmixed.

\section{Undirected graphs} \label{sec1}
In this section, we study the ideal of weak graph homomorphisms of undirected graphs. Let $G$ and $H$ be simple graphs on vertex sets $[n]$ and $[m]$, respectively. Recall from Introduction that  the ideal of weak graph homomorphisms from $G$ to $H$ is
\[
I_{G\to H}=(u_\phi:\phi\in\WHom(G,H))\subseteq S,
\] 
where $S=K[x_{ij}: i\in [n], j\in [m]]$ and $u_\phi=\prod_{i=1}^{n}x_{i\phi(i)}$, for each $\phi\in \WHom(G,H)$.





The following observation describes $I_{G\to H}$ in two extremal cases.

\begin{Remark}\label{rem:1}
Let $G$ and $H$ be graphs on vertex sets $[n]$ and $[m]$, respectively. If
$G$ is totally disconnected or $H$ is a complete graph, then every
map $\phi:G\to H$ is a weak homomorphism. Consequently,
\[
I_{G\to H}=\prod_{i=1}^{n}\mathfrak{p}_i,
\qquad
\mathfrak{p}_i=(x_{ij}:j\in [m]).
\]
Since the ideals $\mathfrak{p}_1, \ldots, \mathfrak{p}_n$ are generated by variables in pairwise
disjoint sets of variables, the ideal $I_{G\to H}$ is a transversal
matroidal ideal.
\end{Remark}

The following criterion for linearly related monomial ideals will be used to characterize when $I_{G\to H}$ and its powers have linear resolutions.

Recall that the least common multiple of two monomials $u,v$ is denoted by $\lcm(u,v)$. Let $I$ be a monomial ideal generated in degree $d$. In \cite{BHZ2017}, the authors associated a graph $G_I$ to $I$ as follows: the vertex set of $G_I$ is $\cG(I)$, and $\{u,v\}\in E(G_I)$ if and only if $\deg(\lcm(u,v))=d+1$. Moreover, for $u,v\in \cG(I)$, let $G_I^{(u,v)}$ denote the induced subgraph of $G_I$ on the vertex set
\[
\{w\in \cG(I): w \mid \lcm(u,v)\}.
\]
The connectivity of $G_I^{(u,v)}$ characterizes when $I$ is linearly related.

\begin{Theorem}\cite[Corollary 2.2]{BHZ2017}\label{ref:BHN}
Let $I$ be a monomial ideal generated in degree $d$. Then $I$ is linearly related if and only if for all $u,v\in \cG(I)$ there is a path in $G_I^{(u,v)}$ connecting $u$ and $v$.
\end{Theorem}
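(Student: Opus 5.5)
The plan is to work with the first syzygy module of $I$ directly, using the $\mathbb{Z}^t$-multigrading. Let $\cG(I)=\{u_1,\dots,u_r\}$ and let $F=\bigoplus_{w\in\cG(I)} R e_w$ with $\deg e_w=w$ (multidegree). Let $\varepsilon\colon F\to I$ be given by $e_w\mapsto w$, and set $Z=\Ker\varepsilon$.

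The first step is the standard Taylor-type fact: $Z$ is generated by the binomial syzygies
$$\sigma_{w,w'}=\frac{\lcm(w,w')}{w}e_w-\frac{\lcm(w,w')}{w'}e_{w'},\qquad w,w'\in\cG(I).$$
The multidegree of $\sigma_{w,w'}$ is $\lcm(w,w')$, and its total degree is $\deg\lcm(w,w')\geq d+1$, with equality exactly when $\{w,w'\}\in E(G_I)$.

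By graded Nakayama, $I$ is linearly related iff $Z$ is generated in degree $d+1$. Since the $\sigma$'s generate $Z$, this holds iff every $\sigma_{u,v}$ lies in the submodule $L$ generated by the $\sigma_{w,w'}$ with $\{w,w'\}\in E(G_I)$.

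For ($\Leftarrow$), take $u,v$ with a path $u=w_0,w_1,\dots,w_s=v$ in $G_I^{(u,v)}$, and put $m=\lcm(u,v)$. Each $\lcm(w_{i-1},w_i)$ divides $m$, because both $w_{i-1}$ and $w_i$ divide $m$. Hence the following telescoping identity holds with monomial coefficients:
$$\sigma_{u,v}=\sum_{i=1}^{s}\frac{m}{\lcm(w_{i-1},w_i)}\,\sigma_{w_{i-1},w_i}.$$
Indeed, each term contributes $\frac{m}{w_{i-1}}e_{w_{i-1}}-\frac{m}{w_i}e_{w_i}$, and the sum collapses to $\sigma_{u,v}$. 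Therefore $\sigma_{u,v}\in L$, so $Z=L$ and $I$ is linearly related.

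For ($\Leftarrow$ converse, i.e. $\Rightarrow$), assume $I$ is linearly related and fix $u,v$ with $m=\lcm(u,v)$. Then $\sigma_{u,v}\in L$. Restrict to the multidegree-$m$ component: $F_m$ has $K$-basis $\{(m/w)e_w : w\mid m\}$, and $L_m$ is $K$-spanned by the elements $\mu\,\sigma_{w,w'}$ with $\mu$ a monomial, $\{w,w'\}\in E(G_I)$ and $\mu\lcm(w,w')=m$. In particular $w,w'\mid m$, so $\{w,w'\}$ is an edge of $G_I^{(u,v)}$. In the basis above, each such element has coefficient $+1$ on $w$, $-1$ on $w'$, and $0$ elsewhere.

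Now suppose $u$ and $v$ lay in different connected components of $G_I^{(u,v)}$, and let $C$ be the component of $u$. The linear functional "sum of the coefficients over $C$" vanishes on every spanning element of $L_m$, since each edge lies inside a single component. But this functional takes the value $1$ on $\sigma_{u,v}$, which is a contradiction. Hence a path from $u$ to $v$ exists in $G_I^{(u,v)}$.

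The only step needing care is the first one, namely that the $\sigma_{w,w'}$ generate $Z$. This is the standard description of syzygies of monomial ideals (the degree-one part of the Taylor resolution), and I would cite it rather than reprove it. I would also make sure that the "linearly related iff $Z$ is generated in degree $d+1$" equivalence is stated via minimal generators, i.e. via $\beta_{1,j}(I)$.
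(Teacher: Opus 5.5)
The paper gives no proof of this statement. It is imported from \cite[Corollary 2.2]{BHZ2017} and used only as a black box in Theorems~\ref{Theorem:linres} and~\ref{thm:directedlinres}, so there is no internal argument to compare with. Judged on its own, your proof is correct and self-contained.

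\textbf{The reduction to $L$.} The step ``$I$ is linearly related if and only if every $\sigma_{u,v}\in L$'' is right, and your phrase ``since the $\sigma$'s generate $Z$'' does contain the needed argument. Write it out explicitly, because the ($\Rightarrow$) direction depends on the inclusion $Z_{d+1}\subseteq L$. Every $\sigma_{w,w'}$ with $w\neq w'$ has degree at least $d+1$. Hence the degree-$(d+1)$ component of $Z$ is the $K$-span of those $\sigma_{w,w'}$ of degree exactly $d+1$, which are precisely the edge syzygies. This gives $RZ_{d+1}=L$.

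\textbf{The two directions.} In ($\Rightarrow$) you also use, without saying so, that $L$ is a $\mathbb{Z}^t$-graded submodule. This holds because $L$ is generated by multihomogeneous elements. The telescoping identity for ($\Leftarrow$) is correct. The component-sum functional on the multidegree-$m$ strand for ($\Rightarrow$) is also correct.

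\textbf{A slightly stronger conclusion.} For $m=\lcm(u,v)$, your computation gives more than the statement asks for. $Z_m$ is the sum-zero hyperplane in the span of $\{(m/w)e_w : w\in\cG(I),\ w\mid m\}$. $L_m$ is the span of the edge differences of the induced subgraph of $G_I$ on $\{w\in\cG(I): w\mid m\}$. Therefore $\dim_K (Z/L)_m$ equals the number of connected components of $G_I^{(u,v)}$ minus one. This is a multidegree-by-multidegree measure of how far $I$ is from being linearly related. It is exactly the mechanism the paper exploits when it exhibits disconnected graphs $G_I^{(u,v)}$ in the proofs of Theorems~\ref{Theorem:linres} and~\ref{thm:directedlinres}.
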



We now characterize when the ideal $I_{G\to H}^k$ has a linear resolution.

\begin{Theorem}\label{Theorem:linres}
Let $G$ and $H$ be graphs. Then the following conditions are equivalent.
\begin{enumerate}
    \item $I_{G\to H}^k$ is a polymatroidal ideal, for all $k\geq 1$;
    \item $I_{G\to H}^k$ is a polymatroidal ideal, for some $k\geq 1$;
 
    \item $I_{G\to H}^k$ has linear quotients, for some $k\geq 1$;
    \item $I_{G\to H}^k$ has a linear resolution, for some $k\geq 1$;
    \item $I_{G\to H}^k$ is linearly related,  for some $k\geq 1$;
    \item $G$ is totally disconnected or $H$ is a complete graph.
\end{enumerate}
\end{Theorem}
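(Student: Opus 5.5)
The plan is to close the cycle of implications (6)$\Rightarrow$(1)$\Rightarrow$(2)$\Rightarrow$(3)$\Rightarrow$(4)$\Rightarrow$(5)$\Rightarrow$(6). Most links are standard facts, so the real work is (5)$\Rightarrow$(6).

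For (6)$\Rightarrow$(1), Remark \ref{rem:1} gives $I_{G\to H}=\mathfrak p_1\cdots\mathfrak p_n$, a transversal polymatroidal ideal. Hence $I_{G\to H}^k=\mathfrak p_1^k\cdots\mathfrak p_n^k$ is a product of polymatroidal ideals, and a product of polymatroidal ideals is polymatroidal. One can also verify the exchange property directly, since the variables of distinct $\mathfrak p_i$ are disjoint and each $\mathfrak p_i^k$ is a Veronese-type ideal in its own variables. The implication (1)$\Rightarrow$(2) is trivial. For (2)$\Rightarrow$(3), polymatroidal ideals are weakly polymatroidal, hence have linear quotients. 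For (3)$\Rightarrow$(4), $I_{G\to H}^k$ is generated in the single degree $nk$, so linear quotients give a linear resolution. For (4)$\Rightarrow$(5), a linear resolution forces $\beta_{1,j}=0$ for $j\neq nk+1$.

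For (5)$\Rightarrow$(6), argue by contraposition. Suppose $\{a,b\}\in E(G)$ and $\{c,d\}\notin E(H)$ with $c\neq d$. Note that every generator of $I_{G\to H}^k$ has, for each $i\in[n]$, exactly $k$ variables with first index $i$ (counted with multiplicity). Fix any $e\in[m]$. Let $\phi$ send $a\mapsto c$ and $\psi$ send $a\mapsto d$, with both sending every other vertex to $c$ and $d$ respectively. Both maps are constant, hence weak homomorphisms. Set
$$u=x_{ac}^{k}\prod_{i\neq a}x_{ic}^k\cdot(\text{nothing else})=u_\phi^k,$$
but choose more carefully: let $u=x_{ac}^k x_{bd}^k\prod_{i\ne a,b}x_{ie}^k$ and $v=x_{ad}^k x_{bc}^k\prod_{i\neq a,b}x_{ie}^k$. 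Here $u$ must be a product of $k$ generators, so the map $a\mapsto c$, $b\mapsto d$ would have to be a weak homomorphism. It is not, since $\{c,d\}\notin E(H)$.

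So I choose instead $u=u_\phi^k$ and $v=u_\psi^k$ for the constant maps $\phi\equiv c$ and $\psi\equiv d$. Then $\lcm(u,v)=\prod_i x_{ic}^kx_{id}^k$. Any $w\in\cG(I^k_{G\to H})$ dividing this lcm is a product $u_{\phi_1}\cdots u_{\phi_k}$ with each $\phi_t$ mapping into $\{c,d\}$. Since $\{c,d\}$ is a non-edge, along each connected component of $G$ every $\phi_t$ is constant. To connect $u$ to $v$ in $G_I^{(u,v)}$ we need a sequence of generators in which consecutive ones have lcm of degree $nk+1$, that is, they differ by exchanging exactly one variable. Changing one variable means moving a single vertex $i$ in one factor from $c$ to $d$. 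This cannot be done one variable at a time when the component of $G$ containing $a,b$ has at least two vertices: the exponent vector of $w$ restricted to the variables $x_{ac},x_{bc}$ must satisfy $\deg_{x_{ac}}w=\deg_{x_{bc}}w$, because each factor is constant on the component. So a single exchange changes $\deg_{x_{ac}}$ without changing $\deg_{x_{bc}}$, which is impossible. I need to double-check this last step: it uses that the multiset decomposition of $w$ is forced. This holds because for each $i$ the $c$-count equals the number of factors with value $c$ on the component. Consequently $u$ is an isolated vertex of $G_I^{(u,v)}$, so no path reaches $v\neq u$, and Theorem \ref{ref:BHN} shows $I^k_{G\to H}$ is not linearly related, contradicting (5).

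The main obstacle is this last combinatorial step: rigorously showing that any $w\in\cG(I^k)$ dividing $\lcm(u,v)$ has equal $x_{ac}$- and $x_{bc}$-degrees. This follows since each factor $u_{\phi_t}$ contributes $x_{ac}$ iff it contributes $x_{bc}$. The divisibility by the lcm forces all factors into $\{c,d\}$-valued maps, and such maps must agree on $a$ and $b$. Consequently, two distinct generators in this set have lcm of degree at least $nk+2$, so $G_I^{(u,v)}$ has no edges at all.
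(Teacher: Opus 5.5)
Your strategy is the paper's. You apply Theorem \ref{ref:BHN} to $k$th powers built from the constant maps onto a non-edge $\{c,d\}$ of $H$, and you use that every factor of a vertex of $G_I^{(u,v)}$ is constant on the component $L$ of $G$ containing the edge $\{a,b\}$. (The paper pairs $u_\phi^k$ with $u_\phi^{k-1}u_\psi$ rather than with $u_\psi^k$, which is immaterial.)

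The gap is in your final deduction. You claim that $u$ is an isolated vertex of $G_I^{(u,v)}$ and that this graph has no edges. Both claims are false whenever $G$ has an isolated vertex $z$, and the contrapositive hypothesis $E(G)\neq\emptyset$ does not exclude this. Indeed, the map $\theta$ with $\theta(z)=d$ and $\theta(i)=c$ for $i\neq z$ is a weak homomorphism. Then $w=u_\phi^{k-1}u_\theta=u\,x_{zd}/x_{zc}$ is a vertex of $G_I^{(u,v)}$ with $\deg\lcm(u,w)=nk+1$, so $u$ has a neighbour. For instance, take $G$ to be the edge $\{a,b\}$ plus an isolated vertex $z$, $H$ two non-adjacent vertices, and $k=1$. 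Then $G_I^{(u,v)}$ has two components, each containing an edge. Your observation $\deg_{x_{ac}}w=\deg_{x_{bc}}w$ only rules out an exchange at $a$ or at $b$. It says nothing about exchanges at vertices outside $L$, and those do occur.

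The repair uses your own invariant, stated for all of $L$:
\begin{enumerate}
\item For every vertex $w$ of $G_I^{(u,v)}$ and any factorization $w=u_{\phi_1}\cdots u_{\phi_k}$, $\deg_{x_{ic}}w$ equals the number of $t$ with $\phi_t(L)=\{c\}$. Hence it is the same for all $i\in V(L)$. You do not need uniqueness of the factorization, which in fact fails in general.
\item Suppose $w,w'$ are adjacent. Both have exactly $k$ variables with first index $i$, for every $i$. So $w'=w\,x_{id}/x_{ic}$ or $w'=w\,x_{ic}/x_{id}$ for a single vertex $i$.
\item If $i\in V(L)$, then, since $|V(L)|\geq 2$, the invariant would fail for $w'$. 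So $i\notin V(L)$.
\item Hence $\deg_{x_{ac}}$ is constant along every path in $G_I^{(u,v)}$. It equals $k$ at $u$ and $0$ at $v$, so $u$ and $v$ lie in different components, and Theorem \ref{ref:BHN} gives the contradiction.
\end{enumerate}
This is exactly the paper's argument, phrased there as the dichotomy ``divisible by $m_a$ or by $m_b$''. The remaining implications in your proposal are fine.
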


\begin{proof}
Let $V(G)=[n]$ and $V(H)=[m]$. The implication $(1)\Rightarrow(2)$ is
trivial, and $(2)\Rightarrow(3)$ is well known, for example, see \cite[Theorem 12.6.2]{HHbook}. Since $I_{G\to H}^k$ is generated in degree $nk$, the implications $(3)\Rightarrow(4)\Rightarrow(5)$ are standard. Thus it suffices to prove
$(5)\Rightarrow(6)$ and $(6)\Rightarrow(1)$.

We first prove $(6)\Rightarrow(1)$. Assume that $G$ is totally disconnected
or that $H$ is a complete graph. By Remark~\ref{rem:1},
$I_{G\to H}=\prod_{i=1}^{n}\mathfrak{p}_i$, where $\mathfrak{p}_i=(x_{ij}:j\in[m])$. Hence, for
every $k\geq 1$,
\[
I_{G\to H}^k=\prod_{i=1}^{n}\mathfrak{p}_i^k.
\]
Since $I_{G\to H}^k$ is a product of monomial prime ideals, it is a transversal polymatroidal ideal.

Next we prove $(5)\Rightarrow(6)$ by contraposition. Suppose that $G$ is not
totally disconnected and that $H$ is not a complete graph. Then
there exist an edge $\{p,q\}\in E(G)$ and two distinct vertices $a,b\in V(H)$
such that $\{a,b\}\notin E(H)$.

Let $\phi$ and $\psi$ be the constant maps defined by $\phi(i)=a$ and
$\psi(i)=b$ for all $i\in [n]$. Then $\phi, \psi \in \WHom(G,H)$, and $u_\phi,u_\psi\in \cG(I_{G\to H})$. Thus, for any $k\geq 1$ the monomials $u_\phi^k$ and
$u_\phi^{k-1}u_\psi$ belong to $\cG(I_{G\to H}^k)$. We claim that there is no path connecting $u_\phi^k$ and
$u_\phi^{k-1}u_\psi$ in $G^{(u_\phi^k,u_\phi^{k-1}u_\psi)}_{I_{G\to H}^k}$.

Let $L$ be the connected component of $G$ containing the edge $\{p,q\}$, and
let $w$ be a vertex of
$G^{(u_\phi^k,u_\phi^{k-1}u_\psi)}_{I_{G\to H}^k}$. Then $w$ is a minimal
generator of $I_{G\to H}^k$ dividing
$\lcm(u_\phi^k,u_\phi^{k-1}u_\psi)$. Thus $w$ is a product
$u_{\theta_1}\cdots u_{\theta_k}$, where each $\theta_t\in\WHom(G,H)$ has
image contained in $\{a,b\}$. Since $\{a,b\}\notin E(H)$, each $\theta_t$ is
constant on $L$.

Moreover, because $w$ divides
$\lcm(u_\phi^k,u_\phi^{k-1}u_\psi)$, at most one of the maps
$\theta_t$ sends $L$ to $b$. Consequently, $w$ is divisible by exactly one of the monomials
\[
m_a=\prod_{i\in L}x_{ia}^{k} \quad \text{or} \quad
m_b=\prod_{i\in L}x_{ia}^{k-1}x_{ib}.
\]
Now let $w$ and $w'$ be adjacent vertices of
$G^{(u_\phi^k,u_\phi^{k-1}u_\psi)}_{I_{G\to H}^k}$. Then $\deg\lcm(w,w')=nk+1.$ Obviously, a generator divisible by $m_a$ and a generator divisible
by $m_b$ differ in at least $|V(L)|\geq 2$ variables. Therefore $w$ and $w'$ must be
 divisible by the same one of the monomials $m_a$ and $m_b$. However,
$u_\phi^k$ is divisible by $m_a$, whereas
$u_\phi^{k-1}u_\psi$ is divisible by $m_b$. Hence there is no path
connecting $u_\phi^k$ and $u_\phi^{k-1}u_\psi$ in $G^{(u_\phi^k,u_\phi^{k-1}u_\psi)}_{I_{G\to H}^k}$.
\end{proof}


  


The following decomposition generalizes Remark~\ref{rem:1} and will be used
repeatedly in the sequel.

\begin{Remark}\label{rem:disjoint-complete}
Let $G$ be a graph on $[n]$ with connected components $G_1,\ldots,G_s$, and suppose
that
\[
H=K_{m_1}\sqcup\cdots\sqcup K_{m_r}
\]
is a graph on $[m]$ which is a disjoint union of complete graphs. For $i\in V(G)$ and $k\in [r]$, set
\[
\mathfrak{p}_{i,k}=(x_{ij}:j\in V(K_{m_k})).
\]
Since every weak graph homomorphism from $G$ to $H$ maps each connected component
of $G$ into a single connected component of $H$, we have
\[
I_{G\to H}
=
\prod_{\ell=1}^{s}
\left(
\sum_{k=1}^{r}
\prod_{i\in V(G_\ell)}\mathfrak{p}_{i,k}
\right).
\]
Moreover, for distinct values of $k$, the ideals
$\prod_{i\in V(G_\ell)}\mathfrak{p}_{i,k}$ involve pairwise disjoint sets of variables. Moreover, for distinct values of $\ell$, the ideals
\[
\sum_{k=1}^{r}\prod_{i\in V(G_\ell)}\mathfrak{p}_{i,k}
\]
also involve pairwise disjoint sets of variables.
\end{Remark}

The following example illustrates the decomposition in Remark~\ref{rem:disjoint-complete}.



\begin{Example}
Let $G$ be the graph with edges $\{a,b\}$ and $\{c,d\}$. Then $G$ has two
connected components $G_1$ and $G_2$ with $V(G_1)=\{a,b\}$ and $V(G_2)=\{c,d\}.$ Let $H=K_1\sqcup K_2$, where $V(K_1)=\{p\}$ and $V(K_2)=\{q,r\}.$ Then
\[
\mathfrak{p}_{a,1}=(x_{ap}), \quad
\mathfrak{p}_{b,1}=(x_{bp}), \quad
\mathfrak{p}_{c,1}=(x_{cp}), \quad
\mathfrak{p}_{d,1}=(x_{dp}),
\]
and
\[
\mathfrak{p}_{a,2}=(x_{aq},x_{ar}), \quad
\mathfrak{p}_{b,2}=(x_{bq},x_{br}), \quad
\mathfrak{p}_{c,2}=(x_{cq},x_{cr}), \quad
\mathfrak{p}_{d,2}=(x_{dq},x_{dr}).
\]
Hence, by Remark~\ref{rem:disjoint-complete},
\[
I_{G\to H}
=
\bigl(\mathfrak{p}_{a,1}\mathfrak{p}_{b,1}+\mathfrak{p}_{a,2}\mathfrak{p}_{b,2}\bigr)
\bigl(\mathfrak{p}_{c,1}\mathfrak{p}_{d,1}+\mathfrak{p}_{c,2}\mathfrak{p}_{d,2}\bigr).
\]
Equivalently,
\[
I_{G\to H}
=
\bigl((x_{ap}x_{bp})+(x_{aq},x_{ar})(x_{bq},x_{br})\bigr)
\bigl((x_{cp}x_{dp})+(x_{cq},x_{cr})(x_{dq},x_{dr})\bigr).
\]
\end{Example}
The following proposition computes the height of $I_{G\to H}$ and will be used
in the proofs of the unmixedness and Cohen--Macaulay criteria.

\begin{Proposition}\label{prop:height}
Let $G$ and $H$ be graphs. Then
\[
\height I_{G\to H}=|V(H)|.
\]
\end{Proposition}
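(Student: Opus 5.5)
We prove the two inequalities $\height I_{G\to H}\le |V(H)|$ and $\height I_{G\to H}\ge |V(H)|$ separately. Since $I_{G\to H}$ is squarefree, its height is the minimum size of a vertex cover: a set $C$ of variables such that every generator $u_\phi$ is divisible by some variable in $C$.

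\emph{Upper bound.} Fix a vertex $i_0\in V(G)=[n]$ and set $C=\{x_{i_0 j}: j\in[m]\}$. Every monomial $u_\phi$ contains exactly one variable of the form $x_{i_0\phi(i_0)}$, so $u_\phi$ is divisible by an element of $C$. Hence $(C)$ contains $I_{G\to H}$, and since $(C)$ is a prime ideal of height $m$, we get $\height I_{G\to H}\le m$. (Equivalently, $I_{G\to H}\subseteq \mathfrak{p}_{i_0}$ in the notation of Remark~\ref{rem:1}.)

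\emph{Lower bound.} Use the $m$ constant maps $\phi_j\colon V(G)\to V(H)$, $\phi_j(i)=j$ for $j\in[m]$. Each $\phi_j$ is a weak homomorphism, because every edge is sent to a single vertex. Thus $u_{\phi_j}=\prod_{i=1}^n x_{ij}\in I_{G\to H}$. For distinct $j$ these monomials have pairwise disjoint supports, since the variables of $u_{\phi_j}$ all have second index $j$. It follows that any prime $\mathfrak{q}\supseteq I_{G\to H}$ must contain a variable from each of the $m$ disjoint sets $\{x_{1j},\dots,x_{nj}\}$, so it contains at least $m$ variables. Therefore every minimal prime of $I_{G\to H}$, which is a monomial prime, has height at least $m$. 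Equivalently, $u_{\phi_1},\dots,u_{\phi_m}$ form a regular sequence inside $I_{G\to H}$, which also gives $\height\ge m$.

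Combining the two bounds gives $\height I_{G\to H}=|V(H)|$. No step is a real obstacle. The only point to check is that constant maps are weak homomorphisms, which holds because the definition allows $\phi(x)=\phi(y)$ for an edge $\{x,y\}$. This holds for every $G$, including when $G$ has edges, and the argument never uses the structure of $H$.
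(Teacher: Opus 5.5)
Your proof is correct and follows essentially the same route as the paper. The upper bound comes from $I_{G\to H}\subseteq \mathfrak{p}_{i_0}=(x_{i_0j}: j\in[m])$. The lower bound comes from the constant maps: the monomials $\prod_{i=1}^n x_{ij}$ have pairwise disjoint supports, so any prime containing $I_{G\to H}$ must contain a variable from each set $\{x_{1j},\dots,x_{nj}\}$. Your regular-sequence variant is the argument the paper itself uses for the directed analogue, Proposition~\ref{htdir}.
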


\begin{proof}
For each $i\in V(G)$, let $\mathfrak{p}_i=(x_{ij}:j\in V(H)).$ Since every generator $u_\phi$ of $I_{G\to H}$ is divisible by exactly one
variable of the form $x_{i\phi(i)}$, we have $I_{G\to H}\subseteq \mathfrak{p}_i$. Therefore $\height I_{G\to H}\leq |V(H)|.$

Conversely, let $\mathfrak{p}$ be a prime monomial ideal containing $I_{G\to H}$. For
each $j\in V(H)$, the constant map $\phi_j(v)=j$ is a weak graph homomorphism, and
\[
u_{\phi_j}=\prod_{i\in V(G)}x_{ij}\in I_{G\to H}\subseteq \mathfrak{p}.
\]
Since $\mathfrak{p}$ is monomial prime of $I_{G\to H}$, for each $j$, it must contain at least one variable from the
set $\{x_{ij}:i\in V(G)\}$. Hence $\height \mathfrak{p}\geq |V(H)|$, and therefore $\height I_{G\to H}=|V(H)|.$
\end{proof}


We next characterize when the ideal $I_{G\to H}$ is unmixed.

\begin{Theorem}\label{theorem:unmixed}
The ideal $I_{G\to H}$ is unmixed if and only if $G$ is totally disconnected
or $H$ is a disjoint union of complete graphs.
\end{Theorem}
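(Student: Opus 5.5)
Since $I_{G\to H}$ is squarefree, it is unmixed exactly when every minimal prime has height $|V(H)|$, the value from Proposition~\ref{prop:height}. I would prove the two directions separately: the ``if'' part from the product structure of Remark~\ref{rem:disjoint-complete}, and the ``only if'' part by exhibiting a minimal prime of larger height.

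For sufficiency, if $G$ is totally disconnected then every vertex of $G$ is a component, and we may also regard $H$ as a single complete graph by Remark~\ref{rem:1}. So in either case we may use the decomposition of Remark~\ref{rem:disjoint-complete},
\[
I_{G\to H}=\prod_{\ell=1}^{s}J_\ell,\qquad J_\ell=\sum_{k=1}^{r}\prod_{i\in V(G_\ell)}\mathfrak{p}_{i,k},
\]
where the $J_\ell$ lie in pairwise disjoint sets of variables. Hence the product equals the intersection, and the minimal primes of $I_{G\to H}$ are exactly the minimal primes of the $J_\ell$. Within $J_\ell$, the summands are in disjoint variables, so a minimal prime of $J_\ell$ is a sum $\sum_k Q_k$ with $Q_k$ a minimal prime of $\prod_{i\in V(G_\ell)}\mathfrak{p}_{i,k}$. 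Each such $Q_k$ must be a single $\mathfrak{p}_{i,k}$, of height $m_k$. Therefore every minimal prime has height $\sum_k m_k=m$, and $I_{G\to H}$ is unmixed.

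For necessity, suppose $E(G)\neq\emptyset$ and $H$ is not a disjoint union of complete graphs. Then $H$ contains an induced path $a - c - b$ with $\{a,b\}\notin E(H)$. Fix an edge $\{p,q\}\in E(G)$. I would try the monomial prime
\[
\mathfrak{P}=(x_{pj}: j\neq a)+(x_{qj}: j\neq b)+(x_{pa},x_{qb}),
\]
or a variant of it, and check that it contains $I_{G\to H}$. For $u_\phi$ to avoid $\mathfrak{P}$ we would need $\phi(p)=a$ and $\phi(q)=b$, but this is forbidden because $a\neq b$ and $\{a,b\}\notin E(H)$. As written, however, this $\mathfrak{P}$ contains all of $\mathfrak{p}_p$, so it is not minimal. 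The correct choice is smaller: take $\mathfrak{P}_0=\mathfrak{p}_p\cup\mathfrak{p}_q$ minus $\{x_{pa},x_{qb}\}$. Then $u_\phi\notin\mathfrak{P}_0$ forces $\phi(p)=a$ and $\phi(q)=b$, which is impossible, so $I_{G\to H}\subseteq\mathfrak{P}_0$. Its height is $2m-2$, which exceeds $m$ when $m\ge3$, and the path $a - c - b$ guarantees $m\geq 3$.

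The main obstacle is showing that some prime contained in $\mathfrak{P}_0$ and containing $I_{G\to H}$ is minimal of height greater than $m$. I would pass to a minimal prime $Q\subseteq\mathfrak{P}_0$ over $I_{G\to H}$ and show that $Q$ cannot have height $m$. If it did, then (the reverse inequality in the proof of Proposition~\ref{prop:height} is tight) $Q$ would contain exactly one variable $x_{i_j j}$ for each $j$, with all $i_j\in\{p,q\}$. I would then build a weak homomorphism $\phi$ with $u_\phi\notin Q$: send $p$ to $c$ if $x_{pc}\notin Q$ (otherwise $q$ to $c$), and use that $c$ is adjacent to both $a$ and $b$ to map everything else consistently, for instance by a constant map to a value $j$ whose chosen variable avoids the relevant vertex, adjusting along the edges at $p$ and $q$. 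This contradicts $I_{G\to H}\subseteq Q$, and the case bookkeeping here is the step I expect to need the most care.
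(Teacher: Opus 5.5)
Your proof is correct. The sufficiency half matches the paper; folding the totally disconnected case into Remark~\ref{rem:disjoint-complete} via $I_{G\to H}=I_{G\to K_m}$ is a tidy way to unify the two cases. The necessity half takes a genuinely different route.

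\textbf{How the two necessity arguments differ.} The paper constructs an explicit prime
\[
\mathfrak{p}=(x_{pi},x_{qj}: i\notin A,\ j\in N_H[a]),\qquad A=\{x\in N_H[a]: N_H[x]\subseteq N_H[a]\},
\]
and shows it is a minimal prime. This requires checking that it contains $I_{G\to H}$, and then checking minimality one generator at a time with a witness map for each. In return the paper obtains the exact height $|V(H)|+|N_H[a]|-|A|$.

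You only need the prime $\mathfrak{P}_0$, and showing $I_{G\to H}\subseteq\mathfrak{P}_0$ is immediate. You then rule out height-$m$ primes below it, using the fact from the proof of Proposition~\ref{prop:height} that a height-$m$ prime over $I_{G\to H}$ has the form $Q=(x_{i_j j}: j\in V(H))$, with exactly one variable in each column. This avoids any minimality verification and is shorter. It is non-constructive, however: it shows that some minimal prime below $\mathfrak{P}_0$ has height $>m$, but does not identify that prime or its height.

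\textbf{The step you flagged closes at once.} From $Q\subseteq\mathfrak{P}_0$ you get $i_a=q$ and $i_b=p$, so $x_{pa},x_{qb}\notin Q$. Record this explicitly, since it is exactly what the witnesses use. Now split on $i_c$.
\begin{itemize}
\item If $i_c=q$, set $\phi(p)=c$ and $\phi(v)=b$ for all $v\neq p$.
\item If $i_c=p$, set $\phi(q)=c$ and $\phi(v)=a$ for all $v\neq q$.
\end{itemize}
In either case every edge of $G$ is sent to a single vertex or to the edge $\{c,b\}$ (respectively $\{c,a\}$). So $\phi$ is a weak homomorphism for arbitrary $G$, with no adjustment along other edges needed. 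Moreover $u_\phi\notin Q$, because $Q$ involves only the rows $p,q$.

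\textbf{Minor cleanup.}
\begin{itemize}
\item The bound $2m-2>m$ plays no role in the argument and can be dropped.
\item The sentence about $\mathfrak{P}$ should be deleted. As written, $\mathfrak{P}=\mathfrak{p}_p+\mathfrak{p}_q$ contains $x_{pa}$ and $x_{qb}$, so the claimed characterization of when $u_\phi$ avoids it is false. Start directly with $\mathfrak{P}_0$ instead.
\end{itemize}
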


\begin{proof}
$\Leftarrow)$ Let $V(G)=[n]$ and $V(H)=[m]$. Assume first that $G$ is totally disconnected. By
Remark~\ref{rem:1},
\[
I_{G\to H}=\prod_{i=1}^{n}\mathfrak{p}_i,
\qquad
\mathfrak{p}_i=(x_{ij}:j\in [m]).
\]
Since the ideals $\mathfrak{p}_1, \ldots, \mathfrak{p}_n$ are monomial prime ideals generated by
variables in pairwise disjoint sets of variables, we have
\[
I_{G\to H}
=
\prod_{i=1}^{n}\mathfrak{p}_i
=
\bigcap_{i=1}^{n}\mathfrak{p}_i.
\]
Hence $\Min(I_{G\to H})=\{\mathfrak{p}_1,\ldots,\mathfrak{p}_n\}$. Since each $\mathfrak{p}_i$ has height $m$,
it follows that $I_{G\to H}$ is unmixed.

Now suppose that $H=K_{m_1}\sqcup\cdots\sqcup K_{m_r}$ is a disjoint union of complete graphs. Using the notation of
Remark~\ref{rem:disjoint-complete}, write
\[
I_{G\to H}=\prod_{\ell=1}^{s}J_\ell,
\qquad
J_\ell=
\sum_{k=1}^{r}\prod_{i\in V(G_\ell)}\mathfrak{p}_{i,k}.
\]

We first determine the minimal primes of $J_\ell$. Since, for each $i$ and
$k$, the ideals $\mathfrak{p}_{i,k}$ are monomial prime ideals generated by variables in
pairwise disjoint sets of variables, we have
\[
\prod_{i\in V(G_\ell)}\mathfrak{p}_{i,k}
=
\bigcap_{i\in V(G_\ell)}\mathfrak{p}_{i,k}.
\]
Thus, for fixed $k$, the ideal
$\prod_{i\in V(G_\ell)}\mathfrak{p}_{i,k}$ has minimal primes $\mathfrak{p}_{i,k}$ with
$i\in V(G_\ell)$. Since the ideals
$\prod_{i\in V(G_\ell)}\mathfrak{p}_{i,k}$ involve pairwise disjoint sets of variables
for distinct values of $k$, the minimal primes of $J_\ell$ are precisely the ideals
\[
\mathfrak{p}_{i_1,1}+\cdots+\mathfrak{p}_{i_r,r},
\qquad
i_k\in V(G_\ell).
\]
Each such prime has height $m_1+\cdots+m_r=|V(H)|$. Hence every $J_\ell$ is
unmixed of height $|V(H)|$.

Since the ideals $J_1,\ldots,J_s$ involve pairwise disjoint sets of variables,
the minimal primes of $I_{G\to H}$ are precisely the minimal primes of the
ideals $J_\ell$, where $\ell=1,\ldots,s$. Therefore every minimal prime of
$I_{G\to H}$ has height $|V(H)|$, and so $I_{G\to H}$ is unmixed.\\

$\Rightarrow)$ Conversely, we argue by contraposition. Suppose that $G$ is not totally
disconnected and that $H$ is not a disjoint union of complete
graphs. We construct a minimal prime $\mathfrak{p}$ of $I_{G\to H}$ such that
$\height \mathfrak{p}>|V(H)|$. Since $\height I_{G\to H}=|V(H)|$ by
Proposition~\ref{prop:height}, it follows that $I_{G\to H}$ is not unmixed.

Since $H$ is not a disjoint union of complete graphs, there exist vertices
$a,b\in V(H)$ belonging to the same connected component such that
$\{a,b\}\notin E(H)$. We may choose $a$ and $b$ so that they have a common
neighbor $c\in V(H)$. Thus $\{a,c\},\{b,c\}\in E(H)$ and $\{a,b\}\notin E(H).$ Since $G$ is not totally disconnected, there exist
vertices $p,q\in V(G)$ such that $\{p,q\}\in E(G)$.

Let
\[
A=\{x\in N_H[a]:N_H[x]\subseteq N_H[a]\}.
\]
Set
\[
\mathfrak{p}:=(x_{pi},x_{qj}:i\in V(H)\setminus A,\ j\in N_H[a]).
\]

We claim that $\mathfrak{p}\in\Min(I_{G\to H})$. First we show that
$I_{G\to H}\subseteq \mathfrak{p}$. Let $\phi\in\WHom(G,H)$. If
$\phi(q)\in N_H[a]$, then $x_{q\phi(q)}\in \mathfrak{p}$, and hence $u_\phi\in \mathfrak{p}$. If
$\phi(q)\notin N_H[a]$, then, since $\{p,q\}\in E(G)$, we have
$\phi(p)\in N_H[\phi(q)]$. Consequently $\phi(p)\notin A$, as otherwise
$N_H[\phi(p)]\subseteq N_H[a]$, contradicting
$\phi(q)\notin N_H[a]$. Thus $x_{p\phi(p)}\in \mathfrak{p}$, and again $u_\phi\in \mathfrak{p}$.

Next we show that the ideal obtained from $\mathfrak{p}$ by deleting any generator does not contain $I_{G\to H}$. Let $j\in N_H[a]$
and define $\phi:G\to H$ by $\phi(p)=a$ and $\phi(v)=j$ for all $v\neq p$.
Then the only variable from $\mathfrak{p}$ dividing $u_\phi$ is $x_{qj}$. Hence the ideal obtained from $\mathfrak{p}$ by deleting $x_{qj}$ does not contain $I_{G\to H}$.

Now let $i\in V(H)\setminus A$. If $i\notin N_H[a]$, then the constant map
$\phi(v)=i$ is a weak graph homomorphism, and the only variable from $\mathfrak{p}$ dividing $u_\phi$ is $x_{pi}$. Suppose instead that
$i\in N_H[a]\setminus A$. Then $N_H[i]\nsubseteq N_H[a]$, so there exists
$d\in N_H[i]\setminus N_H[a]$. Define $\phi:G\to H$ by $\phi(p)=i$ and
$\phi(v)=d$ for all $v\neq p$. Since $d\notin N_H[a]$, the only variable from
$\mathfrak{p}$ dividing $u_\phi$ is $x_{pi}$. Hence the ideal obtained from $\mathfrak{p}$ by deleting any generator does not contain $I_{G\to H}$. Therefore $\mathfrak{p}\in\Min(I_{G\to H})$.

Finally,
\[
\height \mathfrak{p}
=
|V(H)\setminus A|+|N_H[a]|
=
|V(H)|+\bigl(|N_H[a]|-|A|\bigr).
\]
Since $c\in N_H[a]$ and $b\in N_H[c]\setminus N_H[a]$, we have $c\notin A$.
Hence $A\subsetneq N_H[a]$, and therefore $\height \mathfrak{p}>|V(H)|$, proving that $I_{G\to H}$ is not unmixed.
\end{proof}



We now characterize the Cohen--Macaulay and complete intersection properties
of $I_{G\to H}$. The next theorem shows that, for ideals of weak graph homomorphisms, the
Cohen--Macaulay and complete intersection properties coincide.

\begin{Theorem}\label{thm:CM+CI}
Let $G$ and $H$ be two graphs. Then the following conditions are equivalent.
\begin{enumerate}
  \item $I_{G\to H}$ is a complete intersection.
    \item $I_{G\to H}$ is Cohen--Macaulay.
    \item One of the following conditions holds.
    \begin{enumerate}
        \item $G$ consists of an isolated vertex;
        \item $G$ is connected and $H$ is totally disconnected;
        \item $G$ is disconnected and $H$ consists of an isolated vertex.
    \end{enumerate}
\end{enumerate}
\end{Theorem}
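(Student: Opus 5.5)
The implication $(1)\Rightarrow(2)$ is standard, so I will prove $(3)\Rightarrow(1)$ and $(2)\Rightarrow(3)$. Let $V(G)=[n]$ and $V(H)=[m]$.

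For $(3)\Rightarrow(1)$ I check each case against Proposition~\ref{prop:height}, which gives $\height I_{G\to H}=m$.
\begin{enumerate}[(a)]
\item If $G$ is a single vertex, then $I_{G\to H}=(x_{11},\dots,x_{1m})$. This is generated by $m$ variables, so it is a complete intersection.
\item If $G$ is connected and $H$ is totally disconnected, every weak homomorphism is constant. Hence $I_{G\to H}=(\prod_{i}x_{ij}: j\in[m])$. These $m$ monomials have pairwise disjoint supports, so they form a regular sequence.
\item If $H$ is a single vertex, $I_{G\to H}$ is principal, generated by $\prod_i x_{i1}$.
\end{enumerate}
In each case $|\cG(I_{G\to H})|=m=\height I_{G\to H}$.

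For $(2)\Rightarrow(3)$, assume $I_{G\to H}$ is Cohen--Macaulay. Then it is unmixed, so by Theorem~\ref{theorem:unmixed} either $G$ is totally disconnected or $H$ is a disjoint union of complete graphs. The first case is subsumed in the second: if $G$ is totally disconnected, the proof of Theorem~\ref{theorem:unmixed} gives $I_{G\to H}=\bigcap_i\mathfrak p_i$, which is the $r=1$, $m_1=m$ instance of the decomposition in Remark~\ref{rem:disjoint-complete}. So I take $H=K_{m_1}\sqcup\cdots\sqcup K_{m_r}$ with the notation of that remark, and $I_{G\to H}=\prod_\ell J_\ell$.

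The key tool is that a squarefree monomial ideal is Cohen--Macaulay only if its Stanley--Reisner complex is connected in codimension one; in particular, any two minimal primes $P,Q$ must be joined by a chain of minimal primes in which consecutive members satisfy $\height(P+Q)=\height P+1$. From the proof of Theorem~\ref{theorem:unmixed}, the minimal primes are $\mathfrak p_{i_1,1}+\cdots+\mathfrak p_{i_r,r}$ with all $i_k\in V(G_\ell)$ for a single component $G_\ell$. I then rule out the remaining configurations in two cases.

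\emph{Case $G$ disconnected, $m\ge 2$.} Take minimal primes coming from two different components. They share no variables, so the sum of two such primes has height $2m$, whereas Cohen--Macaulayness along a chain would need height $m+1$ at each step; since $2m>m+1$, this step cannot be made. More carefully, any chain passing from component $\ell$ to $\ell'$ must at some point jump between components, and every such jump raises the height by $m\geq 2$. So the complex is not connected in codimension one, and we are forced to $m=1$, which is (c).

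\emph{Case $G$ connected, $|V(G)|\ge 2$.} I must show every $m_k=1$. Suppose $m_k\ge2$ for some $k$. Two minimal primes differing only in the choice of $i_k$ then differ in $m_k\ge 2$ variables. Every minimal prime is determined by a tuple $(i_1,\dots,i_r)$, and changing any coordinate costs $m_{k'}$ variables. So the $k$-th coordinate can never change along a chain of codimension-one steps, and two primes with different $i_k$ (which exist because $|V(G)|\ge2$) cannot be connected. Hence all $m_k=1$, so $H$ is totally disconnected, which is (b). The subcase $|V(G)|=1$ is (a).

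The main obstacle is invoking the connectedness-in-codimension-one criterion (Hartshorne/Reisner) correctly for the intersection of these primes. Alternatively, one could use the product/tensor structure: $S/I_{G\to H}$ decomposes as a sum of ideals in disjoint variables, and depth can be computed with the Künneth-type formula for disjoint variable sets. I would try the connectivity argument first, as it is purely combinatorial.
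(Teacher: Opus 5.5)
Your proof is correct, but it reaches $(2)\Rightarrow(3)$ by a different route from the paper.

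\textbf{The paper's route.} It computes the depth exactly. Using the decomposition of Remark~\ref{rem:disjoint-complete}, it combines three cited results: the depth formula for sums of ideals in disjoint variables, the Hoa--Tam formula for products, and the value $n_\ell-1$ for the depth of a transversal matroidal quotient. This gives $\depth S/I_{G\to H}=r(n-s)+s-1$, which it compares with $\dim S/I_{G\to H}=nm-m$. The case of totally disconnected $G$ is treated separately via the Herzog--Hibi classification of Cohen--Macaulay matroidal ideals (principal or squarefree Veronese).

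\textbf{Your route.} You use only the qualitative obstruction that a Cohen--Macaulay Stanley--Reisner complex is strongly connected (Hartshorne's connectedness theorem). You combine it with the explicit list of minimal primes from the proof of Theorem~\ref{theorem:unmixed}. The height count
\[
\height(P_{\mathbf i}+P_{\mathbf i'})=m+\sum_{k:\ i_k\neq i'_k}m_k,
\]
together with the fact that primes from different components of $G$ share no variables, then does all the work. Folding the totally disconnected case into the $r=1$, $m_1=m$ instance is legitimate. There $I_{G\to H}=I_{G\to K_m}$, and your disconnected-case conclusion depends only on $m$. The connected case with $|V(G)|\ge 2$ forces an edge, so the real $H$ is indeed a disjoint union of complete graphs there. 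This avoids the matroid classification entirely.

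\textbf{Comparison.} Your argument is more combinatorial and rests on a single standard fact rather than several depth formulas. The paper's argument yields the exact depth of $S/I_{G\to H}$ whenever $H$ is a disjoint union of complete graphs, which your argument does not provide. Incidentally, your disconnected-case step does not need Theorem~\ref{theorem:unmixed} at all. For any $H$, $I_{G\to H}=\prod_\ell I_{G_\ell\to H}$ in disjoint sets of variables, and purity comes directly from Cohen--Macaulayness.
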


\begin{proof}
It is well-known that every complete intersection is Cohen--Macaulay. Hence $(1)\Rightarrow(2)$. 

We first prove $(3)\Rightarrow(1)$. If $G$ consists of an isolated vertex, say 1, then
$I_{G\to H}=(x_{1j}:j\in V(H))$, which is generated by variables. Hence
$I_{G\to H}$ is a complete intersection.

If $G$ is connected and $H$ is totally disconnected, then every weak graph
homomorphism from $G$ to $H$ is constant. Therefore
\[
I_{G\to H}
=
\left(\prod_{i\in V(G)}x_{ij}:j\in V(H)\right).
\]
The generators have pairwise disjoint supports, and hence form a regular
sequence. Thus $I_{G\to H}$ is a complete intersection.

Finally, if $G$ is disconnected and $H$ consists of a single vertex, then
there is only one weak homomorphism from $G$ to $H$. Hence $I_{G\to H}$ is
principal, and therefore a complete intersection.

It remains to prove $(2)\Rightarrow(3)$. Let $V(G)=[n]$ and $V(H)=[m]$. Assume that $I_{G\to H}$ is
Cohen--Macaulay. Then $I_{G\to H}$ is unmixed. By
Theorem~\ref{theorem:unmixed}, either $G$ is totally disconnected or $H$ is a
disjoint union of complete graphs.

First suppose that $G$ is totally disconnected. By
Remark~\ref{rem:1}, we have $I_{G\to H}=\prod_{i=1}^{n}\mathfrak{p}_i$, where
$\mathfrak{p}_i=(x_{ij}:j\in V(H))$, and $I_{G\to H}$ is a transversal matroidal ideal. If $n=1$, then condition $(3a)$ holds. So, assume that $n\geq 2$. If $m=1$, then $(3c)$ holds. Therefore, suppose that $m\geq 2$. By \cite[Theorem 12.6.7]{HHbook}, a
Cohen--Macaulay matroidal ideal is either a principal or a squarefree Veronese
ideal. Since $I_{G\to H}=\prod_{i=1}^{n}\mathfrak{p}_i$ is a product of monomial prime
ideals generated in pairwise disjoint sets of variables, it cannot be a
squarefree Veronese ideal when $m,n\geq 2$. Hence $I_{G\to H}$ is principal.
This happens only when each $\mathfrak{p}_i$ is principal, equivalently $m=1$, a contradiction. 

Now suppose that $G$ is not totally disconnected. Then $H$ is a disjoint union
of complete graphs. Write $H=K_{m_1}\sqcup\cdots\sqcup K_{m_r}$,
and let $G_1,\ldots,G_s$ be the connected components of $G$. Set
$n_\ell=|V(G_\ell)|$. By
Remark~\ref{rem:disjoint-complete},
\[
I_{G\to H}=\prod_{\ell=1}^{s}J_\ell, \quad \text{ where } \quad
J_\ell=
\sum_{k=1}^{r}\prod_{i\in V(G_\ell)}\mathfrak{p}_{i,k},
\quad \text{ and } \quad
\mathfrak{p}_{i,k}=(x_{ij}:j\in V(K_{m_k})).
\]

We compute the depth of $S/I_{G\to H}$. For fixed $\ell$, let $S_\ell$ be the
polynomial ring generated by the variables appearing in $J_\ell$. By \cite[Lemma 7.3.7]{Villareal}, we have 
\[
\depth S_\ell/J_\ell
=
\sum_{k=1}^{r}
\depth \left(S_{\ell,k}\Big/\prod_{i\in V(G_\ell)}\mathfrak{p}_{i,k}\right),
\]
where $S_{\ell,k}$ is the polynomial ring generated by the variables appearing
in $\prod_{i\in V(G_\ell)}\mathfrak{p}_{i,k}$. For each $k$, the ideal
$\prod_{i\in V(G_\ell)}\mathfrak{p}_{i,k}$ is a transversal matroidal ideal of degree
$n_\ell$. Hence, by \cite[Theorem 2.5]{CHW}, its quotient has depth
$n_\ell-1$. Therefore $\depth S_\ell/J_\ell=r(n_\ell-1)$.

Since the ideals $J_1,\ldots,J_s$ involve pairwise disjoint sets of variables, it follows from \cite[Lemma 2.2]{HT} that
\[
\depth S/I_{G\to H}
=
\sum_{\ell=1}^{s}\depth S_\ell/J_\ell+(s-1)
=
r(n-s)+s-1.
\]

On the other hand, by Proposition~\ref{prop:height}, $\height I_{G\to H}=m$.
Since $S$ has $nm$ variables, $\dim S/I_{G\to H}=nm-m$. As
$I_{G\to H}$ is Cohen--Macaulay, we get
\begin{equation}\label{eq:cm}
r(n-s)+s-1=nm-m.
\end{equation}

We now distinguish two cases. If $G$ is connected, then $s=1$, and
(\ref{eq:cm}) becomes $r(n-1)=m(n-1)$. Since $G$ is not totally disconnected,
we have $n>1$. Thus $r=m$, which means that every connected component of $H$
has exactly one vertex. Hence $H$ is totally disconnected, and condition
$(3b)$ holds.

If $G$ is disconnected, then $s>1$. Since $G$ is not totally disconnected, we
have $n>s$. We claim that $m=1$. By (\ref{eq:cm}),
\[
0
=
nm-m-\bigl(r(n-s)+s-1\bigr)
=
(m-r)n+s(r-1)-m+1.
\]
Since $m\geq r$ and $n>s$, we obtain
\[
0\geq (m-r)s+s(r-1)-m+1=(m-1)(s-1).
\]
As $s>1$, it follows that $m=1$. Hence $H$ consists of a single vertex, and condition $(3c)$ holds.
\end{proof}

The next proposition shows that when $I_{G\to H}$ is unmixed, then its $k$th ordinary and symbolic powers coincide.

\begin{Proposition} \label{symbord}
Let $G$ and $H$ be graphs such that $I_{G\to H}$ is unmixed. Then for every integer $k\geq 1$, one has $I_{G\to H}^{(k)}=I_{G\to H}^k$.    
\end{Proposition}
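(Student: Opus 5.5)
By Theorem~\ref{theorem:unmixed}, unmixedness means either $G$ is totally disconnected or $H$ is a disjoint union of complete graphs. The first case is covered by the second in form: when $E(G)=\emptyset$, Remark~\ref{rem:1} gives $I_{G\to H}=\prod_i\mathfrak{p}_i$, which is also what Remark~\ref{rem:disjoint-complete} gives if we treat each vertex of $G$ as its own component and $H$ as a single block. So the plan is to handle the product form
\[
I_{G\to H}=\prod_{\ell=1}^{s}J_\ell,\qquad J_\ell=\sum_{k=1}^{r}\prod_{i\in V(G_\ell)}\mathfrak{p}_{i,k},
\]
where the blocks $\mathfrak{p}_{i,k}$ use pairwise disjoint sets of variables. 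In the case $E(G)=\emptyset$ we take $r=1$ and each $G_\ell$ a single vertex.

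The key tool is a standard fact about monomial ideals in disjoint sets of variables. If $I\subseteq A$ and $J\subseteq B$ with $A,B$ polynomial rings in disjoint variables and $I,J$ squarefree, then $IJ=I\cap J$, and
\[
(IJ)^{(k)}=I^{(k)}\cap J^{(k)}.
\]
Moreover, $I^{(k)}\cap J^{(k)}=I^{(k)}J^{(k)}$ because the ideals live in disjoint variables. Likewise, for a sum of squarefree ideals in disjoint variables one has the binomial formula
\[
(I+J)^{(k)}=\sum_{a+b=k}I^{(a)}J^{(b)}.
\]
This is the result of Hà--Nguyen--Trung--Trung on symbolic powers of sums. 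The same formula holds for ordinary powers, $(I+J)^k=\sum_{a+b=k}I^aJ^b$.

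Using these facts, we reduce step by step:
\begin{itemize}
\item[(i)] A product of primes in disjoint variables, $P=\prod_i\mathfrak{p}_{i,k}=\bigcap_i\mathfrak{p}_{i,k}$, satisfies $P^{(t)}=\bigcap_i\mathfrak{p}_{i,k}^t=\prod_i\mathfrak{p}_{i,k}^t=P^t$. This uses that powers of primes in disjoint variables intersect to their product, which holds for monomial ideals in disjoint supports.
\item[(ii)] Applying the binomial formulas to the sum over $k$ gives $J_\ell^{(t)}=\sum_{a_1+\dots+a_r=t}\prod_k P_{\ell,k}^{(a_k)}=\sum\prod_k P_{\ell,k}^{a_k}=J_\ell^t$.
\item[(iii)] Applying the product formula to the product over $\ell$ gives $I^{(k)}=\prod_\ell J_\ell^{(k)}=\prod_\ell J_\ell^k=I^k$.
\end{itemize}

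The main obstacle is to justify the two disjoint-variable formulas for symbolic powers without hand-waving. I would argue them directly from the primary decomposition. The minimal primes of $I+J$ are exactly the sums $\mathfrak{p}+\mathfrak{q}$ with $\mathfrak{p}\in\Min(I)$ and $\mathfrak{q}\in\Min(J)$. For monomial primes in disjoint variables, $(\mathfrak{p}+\mathfrak{q})^k=\sum_{a+b=k}\mathfrak{p}^a\mathfrak{q}^b$. The intersection over all pairs then distributes because everything is monomial and the supports are disjoint: a monomial $uv$, with $u$ in the $A$-variables and $v$ in the $B$-variables, lies in $(\mathfrak{p}+\mathfrak{q})^k$ if and only if $\operatorname{ord}_{\mathfrak{p}}u+\operatorname{ord}_{\mathfrak{q}}v\ge k$. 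Taking the minimum over all $\mathfrak{p}$ and $\mathfrak{q}$ separately yields the binomial formula. The product case is the same argument with $\Min(IJ)=\Min(I)\cup\Min(J)$.

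Alternatively, one could cite that an ideal all of whose powers have unchanged associated primes satisfies $I^{(k)}=I^k$. However, the direct order-function computation above is cleaner and self-contained.
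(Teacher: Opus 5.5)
Your proposal is correct and follows essentially the same route as the paper: reduce via Theorem~\ref{theorem:unmixed} and Remarks~\ref{rem:1} and~\ref{rem:disjoint-complete} to iterated sums and products of squarefree monomial ideals in disjoint sets of variables, then apply $(IJ)^{(k)}=I^{(k)}\cap J^{(k)}=I^{(k)}J^{(k)}$ and the binomial formula $(I+J)^{(k)}=\sum_{a+b=k}I^{(a)}J^{(b)}$. The only difference is that the paper simply cites \cite[Theorem 3.4]{HNTT} for the binomial formula, whereas you also give a valid self-contained proof of it via the criterion $\operatorname{ord}_{\mathfrak p}u+\operatorname{ord}_{\mathfrak q}v\ge k$.
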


\begin{proof}
We know from Theorem~\ref{theorem:unmixed} that either $G$ is totally disconnected or $H$ is a
disjoint union of complete graphs. In the first case, by Remark~\ref{rem:1}, the ideal $I_{G\to H}$ is a product of monomial prime ideals involving disjoint sets of variable. In the second case, by Remark~\ref{rem:disjoint-complete}, the ideal $I_{G\to H}$ is obtained by the sum and product of squarefree monomial ideals involving disjoint sets of variables. In both cases, the assertion follows from the following observations.

$\bullet$ Let $I$ ad $J$ be squarefree monomial ideals on disjoint sets of variables such that $I^{(k)}=I^k$, for each $k\geq 1$. Then$$(IJ)^k=I^kJ^k=I^k\cap J^k=I^{(k)}\cap J^{(k)}=(I\cap J)^{(k)}=(IJ)^{(k)}.$$

$\bullet$ Let $I$ ad $J$ be squarefree monomial ideals on disjoint sets of variables such that $I^{(k)}=I^k$, for each $k\geq 1$. Then
\begin{align*}
(I+J)^k=\sum_{i=0}^kI^iJ^{k-i}=\sum_{i=0}^kI^{(i)}J^{(k-i)}=(I+J)^{(k)},   
\end{align*}
where the last equality follows from \cite[Theorem 3.4]{HNTT}.
\end{proof}

We next study the behavior of regularity and projective dimension of the ideals of weak graph homomorphisms. We first show that the Betti numbers of these ideals are monotone under passing to induced subgraphs of $H$. We mention that for any monomial ideal $I\subseteq S$ and for any monomial $M\in S$, the monomial ideal generated by $\{u\in I\mid u \ {\rm divides} \ M\}$ is denoted by $I^{\leq M}$.




\begin{Lemma}\label{restriction}
Let $G$ and $H$ be graphs and let $H'$ be an induced subgraph of $H$. Then for any $i,j\geq 0$ and for any integer $k\geq 1$, one has
\begin{itemize}
    \item [(i)] $\beta_{i,j}(I_{G\rightarrow H'}^k)\leq\beta_{i,j}(I_{G\rightarrow H}^k)$.
    \item [(ii)] $\beta_{i,j}(I_{G\rightarrow H'}^{(k)})\leq\beta_{i,j}(I_{G\rightarrow H}^{(k)})$.
\end{itemize}
\end{Lemma}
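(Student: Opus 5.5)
The plan is to realize $I_{G\to H'}^k$ (resp. $I_{G\to H'}^{(k)}$) as a restriction $I^{\leq M}$ of $I_{G\to H}^k$ (resp. $I_{G\to H}^{(k)}$) to a suitable monomial $M$, up to adjoining variables, and then to invoke the standard restriction lemma. That lemma says that for any monomial ideal $I$ and any monomial $M$, one has $\beta_{i,j}(I^{\leq M})\leq \beta_{i,j}(I)$. It follows from Hochster/Taylor-type multigraded arguments: the multigraded Betti numbers of $I^{\leq M}$ in degrees $\mathbf a\leq M$ agree with those of $I$, and all multidegrees appearing in its resolution divide $M$.

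Write $V(H')=W\subseteq V(H)$. Regard $S'=K[x_{ij}: i\in V(G), j\in W]$ as a subring of $S$; extending an ideal to $S$ does not change graded Betti numbers. Take $M=\prod_{i\in V(G),\,j\in W}x_{ij}^{N}$ with $N$ large (for instance $N=k$). A minimal generator of $I_{G\to H}^k$ dividing $M$ is a product $u_{\phi_1}\cdots u_{\phi_k}$ with every $\phi_t$ having image in $W$. Since $H'$ is induced, a map $\phi:V(G)\to W$ is a weak homomorphism into $H$ if and only if it is one into $H'$. Hence $(I_{G\to H}^k)^{\leq M}=I_{G\to H'}^k S$, and (i) follows.

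For (ii), the analogous identity $(I^{(k)})^{\leq M}$ must be checked for symbolic powers, and this is the main obstacle. The clean route is to use the formula for the restriction of an intersection: $(J_1\cap J_2)^{\leq M}$ equals the ideal generated by monomials dividing $M$ that lie in both $J_1$ and $J_2$. Equivalently, we use the substitution map $\pi$ setting $x_{ij}=0$ for $j\notin W$. For a squarefree monomial ideal $I$ with minimal primes $\mathfrak p$, the image $\pi(I)$ is again squarefree. Its minimal primes are the minimal elements among $\pi(\mathfrak p)$, where $\pi(\mathfrak p)$ is the prime generated by the surviving variables of $\mathfrak p$. Moreover the monomials of $I^{(k)}$ supported on $W$-variables are exactly those lying in $\bigcap_{\mathfrak p}\pi(\mathfrak p)^k$. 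Since containment in a power of a larger prime is implied by containment in the same power of a smaller one, the non-minimal $\pi(\mathfrak p)$ are redundant. Thus $(I^{(k)})^{\leq M}S'=(\pi(I))^{(k)}$.

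Applying this with $I=I_{G\to H}$, we get $\pi(I_{G\to H})=I_{G\to H'}$ by the first paragraph with $k=1$. Hence $(I_{G\to H}^{(k)})^{\leq M}=I_{G\to H'}^{(k)}S$ for $N$ large enough. The restriction lemma then gives (ii). The care needed is in verifying that the intersection of $k$th powers of the restricted primes, including non-minimal ones, equals the symbolic power of $\pi(I)$; this is the monotonicity observation above.
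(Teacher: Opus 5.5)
Your proposal is correct and follows essentially the paper's proof: the paper takes $M=\prod_{i\in V(G),\,j\in V(H')}x_{ij}$ and observes that $I_{G\to H'}^k$ and $I_{G\to H'}^{(k)}$ are the restrictions of $I_{G\to H}^k$ and $I_{G\to H}^{(k)}$ to $M^k$ (your choice $N=k$). It then concludes from the fact that the minimal multigraded resolution of $I^{\leq M}$ is a subcomplex of that of $I$. Your explicit check of the symbolic-power identity, via $I^{(k)}\cap S'=\bigcap_{\mathfrak p}\pi(\mathfrak p)^k=\pi(I)^{(k)}$, supplies the step the paper leaves as ``easy to see.''
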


\begin{proof}
Set
$M:=\prod_{i\in V(G)}\prod_{j\in V(H')}x_{ij}$. It is easy to see that
\[
I_{G\to H'}^k=(I_{G\to H}^k)^{\leq M^k} \ \ \ \ \ {\rm and} \ \ \ \ \ I_{G\to H'}^{(k)}=(I_{G\to H}^{(k)})^{\leq M^k}.
\]

By \cite[Lemma~4.4]{HHZ}, the minimal multigraded free resolution of
$(I_{G\to H}^k)^{\leq M^k}$ (resp. $(I_{G\to H}^{(k)})^{\leq M^k}$) is a subcomplex of the minimal multigraded free
resolution of $I_{G\to H}^k$ (resp. $(I_{G\to H}^{(k)})$). Therefore the assertions follow.
\end{proof}

Using above lemma, we obtain lower bounds for the regularity of $I^k_{G\to H}$ and $I^{(k)}_{G\to H}$ when $G$ is a connected graph. Recall that the independence number of a graph $G$ is denoted by $\alpha(G)$.


\begin{Lemma} \label{lem:regconundir}
Let $G$ and $H$ be graphs such that $G$ is connected. Then for any integer $k\geq 1$, one has
\begin{itemize}
    \item [(i)] $\reg I_{G\rightarrow H}^k\geq k|V(G)|+(|V(G)|-1)(\alpha(H)-1)$.
    \item [(ii)] $\reg I_{G\rightarrow H}^{(k)}\geq k|V(G)|+(|V(G)|-1)(\alpha(H)-1)$.
\end{itemize}
\end{Lemma}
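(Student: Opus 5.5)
Let $n=|V(G)|$ and $\alpha=\alpha(H)$, and fix an independent set $A\subseteq V(H)$ with $|A|=\alpha$. Let $H'$ be the induced subgraph of $H$ on $A$. It is totally disconnected. By Lemma~\ref{restriction}, $\reg I_{G\to H}^k\geq \reg I_{G\to H'}^k$, and the same holds for symbolic powers. So it suffices to prove both bounds for $H'$.

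Since $G$ is connected and $H'$ has no edges, every weak homomorphism $G\to H'$ is constant. Hence
\[
I_{G\to H'}=(f_1,\ldots,f_\alpha),\qquad f_j=\prod_{i\in V(G)}x_{ij}.
\]
The $f_j$ are squarefree monomials of degree $n$ in pairwise disjoint sets of variables, so $I_{G\to H'}$ is a complete intersection; this is the computation in the proof of Theorem~\ref{thm:CM+CI}, case (3b).

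For a complete intersection the ordinary and symbolic powers coincide. Concretely, $I_{G\to H'}$ is, by Remark~\ref{rem:disjoint-complete}, a sum of ideals $(f_j)$ in disjoint variables. Each $(f_j)=\bigcap_i (x_{ij})$ is principal and satisfies $(f_j)^{(k)}=(f_j)^k$. The second bullet in the proof of Proposition~\ref{symbord} then gives $I_{G\to H'}^{(k)}=I_{G\to H'}^k$. So part (ii) follows from part (i).

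For part (i), I compute $\reg$ of the $k$th power of a complete intersection of $\alpha$ forms of degree $d=n$. The standard result is that $J^k$ has the Eagon–Northcott/Koszul-type resolution, whose last module is generated in degree $d(k+\alpha-1)$ at homological degree $\alpha-1$. This gives
\[
\reg J^k = dk+(d-1)(\alpha-1),
\]
which is exactly $kn+(n-1)(\alpha-1)$.

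To avoid citing this, I would argue via polarization and the LCM lattice. Replacing each $f_j$ by a single new variable $y_j$ preserves the LCM lattice of the $k$th power, since the $f_j$ have disjoint supports. The ideal $(y_1,\ldots,y_\alpha)^k$ has a linear resolution with $\projdim=\alpha-1$ and top shift $k+\alpha-1$. Its multigraded Betti numbers are supported on lcm's in the lattice, and the multigraded shifts transfer through the lattice isomorphism, as in \cite{gpw}, where Betti numbers are determined by the lattice together with the degree of each lattice element. The top Betti number sits at a multidegree $y^{\mathbf a}$ of total degree $k+\alpha-1$ involving all $y_j$. The corresponding lcm in the original ring is $\prod_j f_j^{a_j}$, of degree $n(k+\alpha-1)$, at homological degree $\alpha-1$. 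Therefore
\[
\reg I_{G\to H'}^k\geq n(k+\alpha-1)-(\alpha-1)=kn+(n-1)(\alpha-1).
\]

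The main point of care is this last transfer: the fact that the multigraded Betti numbers depend only on the LCM lattice, with degrees read off from the lcm's. This is the content of \cite[Theorem 2.1]{gpw}, or can be seen directly since the map $y_j\mapsto f_j$ is a flat substitution that sends a minimal free resolution to a minimal free resolution. The edge case $\alpha=1$ is trivial, since the bound reduces to $kn$, the generating degree.
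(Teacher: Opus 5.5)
Your proof is correct and follows essentially the same route as the paper. Both restrict to the induced subgraph on a maximum independent set via Lemma~\ref{restriction}. Both observe that $I_{G\to H'}$ is a complete intersection of $\alpha(H)$ monomials of degree $|V(G)|$, so its ordinary and symbolic powers agree, and then bound $\reg$ of its powers. The only difference is that the paper cites \cite[Lemma 4.4]{BHT} for the regularity of powers of a complete intersection, whereas you derive $\reg J^k=dk+(d-1)(\alpha-1)$ yourself, via the substitution $y_j\mapsto f_j$ applied to the linear resolution of $(y_1,\ldots,y_\alpha)^k$; both ways you justify this step are valid.
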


\begin{proof}
Assume that $V(G)=[n]$ and $V(H)=[m]$ and set $\alpha:\alpha(H)$. Let $A=\{j_1,\ldots,j_{\alpha}\}$ be an independent set of $H$, and let $H'$ be the
induced subgraph of $H$ on $A$. Since $A$ is independent and $G$ is connected, every weak homomorphism from $G$ to $H'$ is constant. Hence
\[
I_{G\to H'}=(u_1,\ldots,u_{\alpha}),
\qquad
u_t=\prod_{i=1}^{n}x_{ij_t}.
\]
The monomials $u_1,\ldots,u_{\alpha}$ have pairwise disjoint supports and degree
$n$. Thus $I_{G\to H'}$ is a complete intersection generated by $\alpha$ monomials
of degree $n$. In particular, $I_{G\rightarrow H'}^k=I_{G\rightarrow H'}^{(k)}$. Therefore, \cite[Lemma 4.4]{BHT} implies that$$\reg I_{G\rightarrow H'}^k=\reg I_{G\rightarrow H'}^{(k)}\geq kn+(n-1)(\alpha-1).$$The assertion now follows from Lemma \ref{restriction}.
\end{proof}

In the following theorem, we extend Lemma \ref{lem:regconundir} to the case that $G$ is not necessarily connected.

\begin{Theorem} \label{thm:regundir}
Let $G$ and $H$ be graphs and let $c$ denote the number of connected components of $G$. Then for any integer $k\geq 1$, one has
\begin{itemize}
    \item [(i)] $\projdim I_{G\rightarrow H}^k\geq |V(H)|-1$.
    \item [(ii)] $\projdim I_{G\rightarrow H}^{(k)}\geq |V(H)|-1$.
    \item [(iii)] $\reg I_{G\rightarrow H}^k\geq k|V(G)|+(|V(G)|-c)(\alpha(H)-1)$.
    \item [(iv)] $\reg I_{G\rightarrow H}^{(k)}\geq k|V(G)|+(|V(G)|-c)(\alpha(H)-1)$.
\end{itemize}
\end{Theorem}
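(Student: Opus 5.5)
The plan is to reduce everything to a well-understood restriction, using Lemma~\ref{restriction}. Monotonicity of Betti numbers under passing to an induced subgraph $H'$ of $H$ gives $\projdim I^k_{G\to H}\geq \projdim I^k_{G\to H'}$ and $\reg I^k_{G\to H}\geq \reg I^k_{G\to H'}$. The same inequalities hold for symbolic powers. So in each part we choose a convenient $H'$ and compute the relevant invariant of $I_{G\to H'}^k$ (respectively $I_{G\to H'}^{(k)}$).

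For (iii) and (iv), take $H'$ to be the induced subgraph on a maximum independent set $A=\{j_1,\ldots,j_\alpha\}$ of $H$. Let $G_1,\ldots,G_c$ be the components of $G$, with $n_\ell=|V(G_\ell)|$. Every weak homomorphism $G\to H'$ is constant on each component. Hence
\[
I_{G\to H'}=\prod_{\ell=1}^{c} J_\ell,\qquad J_\ell=\Bigl(\prod_{i\in V(G_\ell)}x_{ij_t}: t=1,\ldots,\alpha\Bigr).
\]
Each $J_\ell$ is a complete intersection, and the $J_\ell$ live in pairwise disjoint sets of variables. Consequently $I^k_{G\to H'}=\prod_\ell J_\ell^k$. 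By the product observation in the proof of Proposition~\ref{symbord}, this also equals $I^{(k)}_{G\to H'}$, since complete intersections satisfy $J^{(k)}=J^k$.

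For a product of ideals in disjoint variables, $\reg$ is additive: $\reg(IJ)=\reg I+\reg J$ when $I,J$ are in disjoint variables, because $IJ=I\otimes_K J$. Together with \cite[Lemma 4.4]{BHT}, which gives $\reg J_\ell^k\geq kn_\ell+(n_\ell-1)(\alpha-1)$, summing over $\ell$ yields
\[
k|V(G)|+(|V(G)|-c)(\alpha-1).
\]
I expect the main point needing care is citing the additivity of regularity for products in disjoint variables correctly. If a reference is awkward, one can argue through the tensor product of minimal free resolutions, which is minimal, so that the graded Betti numbers multiply.

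For (i) and (ii), take $H'=H$ but compare instead via restriction on $G$. Alternatively, the projective dimension can be bounded from the height: by Proposition~\ref{prop:height} every associated prime of $I$ has height at least $|V(H)|$, and a minimal prime $\mathfrak p$ of height exactly $|V(H)|$ exists. Since $\projdim S/I^k\geq \height I^k=\height I=|V(H)|$, we get $\projdim I^k=\projdim S/I^k-1\geq |V(H)|-1$. The same holds for $I^{(k)}$, which has the same radical and therefore the same height.

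This step is straightforward, using the standard inequality $\projdim S/J\geq \height J$ (Auslander–Buchsbaum, since $\depth S/J\leq\dim S/J$).
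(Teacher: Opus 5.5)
Your proof is correct and uses the same ingredients as the paper. For (i)--(ii) it is the height bound $\projdim S/I^k\geq \height I=|V(H)|$. For (iii)--(iv) it is Lemma~\ref{restriction}, \cite[Lemma 4.4]{BHT} for complete intersections, ordinary and symbolic powers coinciding for the resulting products, and additivity of regularity for products in disjoint variables (the paper cites \cite[Lemma 3.2]{HT}). The only difference is order: the paper inducts on $c$, splitting $I_{G\to H}=I_{G_1\to H}I_{G'\to H}$ for arbitrary $H$, and then uses the connected case Lemma~\ref{lem:regconundir}, whereas you restrict to an independent set once and then split the resulting product of complete intersections. Your aside about ``restriction on $G$'' in (i)--(ii) is unneeded, since the height argument already suffices.
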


\begin{proof}
The assertions about projective dimension follow from Proposition \ref{prop:height}. So, we only prove parts (iii) and (iv). 

Let $G_1, \ldots, G_c$ denote the connected components of $G$. We use induction on $c$. For $c=1$, the assertions follow from Lemma \ref{lem:regconundir}. So, suppose $c\geq 2$. Let $G'$ denote the disjoint union of $G_2, \ldots, G_c$. In other words, $G$ is the disjoint union of $G_1$ and $G'$. It is easy to see that$$I_{G\rightarrow H}=I_{G_1\rightarrow H}I_{G'\rightarrow H}=I_{G_1\rightarrow H}\cap I_{G'\rightarrow H}.$$As a consequence,$$I_{G\rightarrow H}^k=I_{G_1\rightarrow H}^kI_{G'\rightarrow H}^k \ \ \ \ \ \ \ \ {\rm and} \ \ \ \ \ \ \ \ I_{G\rightarrow H}^{(k)}=I_{G_1\rightarrow H}^{(k)}\cap I_{G'\rightarrow H}^{(k)}=I_{G_1\rightarrow H}^{(k)} I_{G'\rightarrow H}^{(k)}.$$Since the minimal monomial generators of $I_{G'\rightarrow H}^k$ and $I_{G_1\rightarrow H}^k$ have disjoint sets of variables, the assertions follow from \cite[Lemma 3.2]{HT} and the induction hypothesis.
\end{proof}





Note that for $k=1$, the bound given for the projective dimension in Theorem~\ref{thm:regundir} is sharp, as the equality $\projdim I_{G\rightarrow H}^k=|V(H)|-1$ is equivalent to Cohen-Macaulayness of $I_{G\to H}$ which is already characterized in Theorem \ref{thm:CM+CI}. Theorem \ref{regstar} below shows that the bound for the regularity is also sharp, even for connected graphs. More precisely, for $k=1$, the equality holds in Theorem~\ref{thm:regundir} (iii) when $G$ is a complete graph and $H$ is a star graph.  To prove this, we first need the following lemma. Recall that a vertex $x$ of a graph $H$ is called a {\it simplicial vertex} if all its neighbors are adjacent.

\begin{Lemma} \label{delsimpl}
Let $G=K_n$ be the complete graph of $n$ vertices. Also, let $H$ be a graph on $[m]$ and assume that $1\in V(H)$ is a simplicial vertex of $H$. Then
$$\reg I_{G\to H}\leq \reg I_{G\to H\setminus 1}+(n-1).$$
\end{Lemma}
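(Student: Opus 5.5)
The plan is to peel off the variables $x_{11},x_{21},\ldots,x_{n1}$ (those recording that a vertex of $G$ is sent to the simplicial vertex $1$) one at a time. At each step I will use the standard inequality
\[
\reg I\leq \max\{\reg (I:x)+1,\ \reg(I+(x))\}
\]
for a monomial ideal $I$ and a variable $x$. Write $I=I_{G\to H}$ and $J=I_{G\to H\setminus 1}$. First I record the combinatorial description when $G=K_n$. A map $\phi$ is a weak homomorphism if and only if any two of its values are equal or adjacent, that is, if and only if $\phi(V(G))$ is a clique of $H$. Since $1$ is simplicial, $N_H[1]$ is a clique. Hence every map $V(G)\to N_H[1]$ is a weak homomorphism, and every weak homomorphism whose image contains $1$ has image inside $N_H[1]$. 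Consequently the generators of $I$ not involving any $x_{i1}$ are exactly the generators of $J$.

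Set $I_t=I+(x_{11},\ldots,x_{t-1,1})$ for $t=1,\ldots,n+1$, so that $I_1=I$. By the above, $I_{n+1}=J+(x_{11},\ldots,x_{n1})$. Adding variables that do not occur in $J$ does not change the regularity, so $\reg I_{n+1}=\reg J$. Applying the inequality iteratively gives
\[
\reg I\leq \max\Big\{\reg J,\ \max_{1\le t\le n}\big(\reg(I_t:x_{t1})+1\big)\Big\}.
\]
It therefore suffices to prove $\reg(I_t:x_{t1})\leq \reg J+n-2$ for every $t$.

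Next I compute the colon ideal. Its generators come from three sources: the variables $x_{i1}$ with $i<t$; the generators of $J$; and the monomials $u_\phi/x_{t1}$ with $\phi(t)=1$. In the last case $\phi$ maps into $N_H[1]$, and we may assume $\phi(i)\neq 1$ for $i<t$, since otherwise the generator is already killed by $x_{i1}$. Putting these together,
\[
I_t:x_{t1}=(x_{i1}:i<t)+J+P_t,\qquad P_t=\prod_{i<t}(x_{ij}:j\in N_H(1))\prod_{i>t}(x_{ij}:j\in N_H[1]).
\]
Here $P_t$ is a product of $n-1$ monomial primes in pairwise disjoint sets of variables, so it is a transversal polymatroidal ideal with linear resolution and $\reg P_t=n-1$. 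Also $J$ involves none of the variables $x_{i1}$, so the summand $(x_{i1}:i<t)$ can be separated off without raising the regularity; more precisely, the variables $x_{i1}$ with $i<t$ can be handled by passing to the quotient by a regular sequence of variables.

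The key step, and the main obstacle, is to bound $\reg(J+P_t)$. I would invoke the known subadditivity bound for squarefree monomial ideals, $\reg(J+P)\leq \reg J+\reg P-1$ (Kalai--Meshulam, in the form proved for sums of squarefree monomial ideals). This gives $\reg(J+P_t)\leq \reg J+n-2$, hence $\reg(I_t:x_{t1})+1\leq \reg J+n-1$, and the lemma follows.

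The thing to double-check is that this subadditivity result applies in the form needed: both ideals squarefree, living in the same polynomial ring, with no disjointness hypothesis. If it does not, the fallback is a direct analysis of $J+P_t$. Note that generators of $J$ whose image lies in $N_H(1)$ restricted to the indices $i\neq t$ are divisible by generators of $P_t$. Using this, I would try to produce a splitting of $J+P_t$ along the vertex $t$, or a further sequence of colon operations, that reduces to $J$ plus linear pieces.
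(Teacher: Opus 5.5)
Your argument is correct, but it reaches the paper's key tool by a detour. The paper's proof is one step. Let $H'$ be the induced subgraph on $N_H[1]$, which is complete. The paper writes $I_{G\to H}=I_{G\to H\setminus 1}+I_{G\to H'}$ and notes that $I_{G\to H'}=\prod_i(x_{ij}:j\in N_H[1])$ has a linear resolution, so $\reg I_{G\to H'}=n$. It then applies Herzog's subadditivity bound $\reg(I_1+I_2)\le \reg I_1+\reg I_2-1$ for squarefree monomial ideals (Corollary 3.2 of the paper's reference [H], the Kalai--Meshulam inequality). Your first paragraph already proves exactly this decomposition, so you could have applied your final tool directly to $I=J+I_{G\to H'}$.

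Instead you peel off $x_{11},\dots,x_{n1}$ using the colon/sum exact sequence and compute $I_t:x_{t1}=(x_{i1}:i<t)+J+P_t$. That formula is right, though you should also say why the generators $u_\phi$ with $\phi(t)\neq 1$ but $1\in\phi(V(G))$ contribute nothing new. If $\phi(i)=1$ for some $i<t$, then $u_\phi$ is killed by $x_{i1}$. Otherwise $\phi(i)\in N_H(1)$ for all $i<t$, and $u_\phi/x_{t\phi(t)}\in P_t$. You then apply the same subadditivity to $J+P_t$ with $\reg P_t=n-1$. The loss of one degree in $P_t$ is exactly offset by the $+1$ in the colon inequality, so you land on the same bound $\reg J+n-1$ with no gain in sharpness.

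On your worry: Herzog's corollary holds for arbitrary squarefree monomial ideals in the same polynomial ring, with no disjointness hypothesis. Both $J$ and $P_t$ are squarefree, so your key step goes through and the fallback is unnecessary. One harmless edge case: if $1$ is isolated in $H$, then $P_t=0$ for $t\ge 2$ and $\reg P_t=n-1$ fails. In that case $I_t:x_{t1}=(x_{i1}:i<t)+J$, and the required bound is immediate.
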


\begin{proof}
Set $I:=I_{G\to H}$ and $I_1:=I_{G\to H\setminus 1}$. Moreover, let $H'$ denote the induced subgraph of $H$ on $N_H[1]$. Since $1\in V(H)$ is a simplicial vertex, it follows that $H'$ is a complete graph. Set $I_2:=I_{G\to H'}$. We show that $I=I_1+I_2$. Indeed, it is clear that $I_1+I_2$ is contained in $I$. To prove the reverse inclusion, let $\phi\in \WHom(G,H)$ be such that $u_{\phi}\notin I_1$. Thus, there is a vertex $j\in [n]$ with $\phi(j)=1$. Since $G$ is a complete graph, it follows that for each vertex $i\in V(G)$, one has $\phi(i)\in N_H[1]$. In other words, $u_{\phi}\in I_2$. As a consequence, $I=I_1+I_2$. Since $H'$ is a complete graph, it follows from Theorem \ref{Theorem:linres} that $\reg I_2=n$. The assertion now follows from \cite[Corollary 3.2]{H}.
\end{proof}

Combining Theorem~\ref{thm:regundir} with the previous lemma, we obtain the following corollary.

\begin{Corollary} \label{regstar}
Let $s,n\geq 1$ be integers and $H=K_{1,s}$ be a star graph. Then $\reg I_{K_n\to H} =sn-s+1$.
\end{Corollary}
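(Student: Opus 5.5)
The plan is to prove matching lower and upper bounds. For the lower bound I will apply Theorem~\ref{thm:regundir}(iii) with $k=1$. The graph $G=K_n$ is connected, so $c=1$. For the star $H=K_{1,s}$ with $s\geq 2$, the leaves $1,\ldots,s$ form an independent set, and so $\alpha(H)=s$. This gives
\[
\reg I_{K_n\to H}\geq n+(n-1)(s-1)=sn-s+1.
\]
When $s=1$ we have $\alpha(K_{1,1})=1$, and the same formula again gives the value $n=sn-s+1$. So in all cases the lower bound is $sn-s+1$.

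For the upper bound I will induct on $s$, removing leaves one at a time with Lemma~\ref{delsimpl}. Each leaf $j\in[s]$ of $K_{1,s}$ is a simplicial vertex, because its only neighbor is the center $s+1$, and a single vertex trivially has all its neighbors adjacent. Deleting leaf $1$ (after relabelling) gives $K_{1,s}\setminus 1\cong K_{1,s-1}$. Lemma~\ref{delsimpl} then yields
\[
\reg I_{K_n\to K_{1,s}}\leq \reg I_{K_n\to K_{1,s-1}}+(n-1).
\]
The base case is $s=1$, where $K_{1,1}=K_2$ is a complete graph. By Theorem~\ref{Theorem:linres}, $I_{K_n\to K_2}$ has a linear resolution in degree $n$, so $\reg I_{K_n\to K_{1,1}}=n$. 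Iterating the inequality gives
\[
\reg I_{K_n\to K_{1,s}}\leq n+(s-1)(n-1)=sn-s+1,
\]
which matches the lower bound.

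The only step that needs care is justifying the relabelling. Lemma~\ref{delsimpl} is stated for vertex $1$ of a graph on $[m]$, and $I_{G\to H}$ is invariant up to renaming variables under isomorphisms of $H$. After deleting a leaf, the remaining graph must be recognised as a star $K_{1,s-1}$ on $s$ vertices, with its center still present. Both points are routine. I expect no real obstacle, since the main difficulty is already handled by Lemma~\ref{delsimpl}.
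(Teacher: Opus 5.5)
Your proposal is correct and follows the paper's proof essentially step for step. The lower bound comes from Theorem~\ref{thm:regundir}(iii) with $k=1$, $c=1$ and $\alpha(K_{1,s})=s$. The upper bound comes by induction on $s$, applying Lemma~\ref{delsimpl} to a leaf, with base case $K_{1,1}=K_2$ handled by Theorem~\ref{Theorem:linres}.
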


\begin{proof}
We know from Theorem \ref{thm:regundir} that $\reg I_{K_n\to H}\geq sn-s+1$. We prove that reverse inequality by induction on $s$. For $s=1$, we have $H=K_2$ and the assertion follows from Theorem \ref{Theorem:linres}. So, suppose that $s\geq 2$. Then Lemma \ref{delsimpl} implies that$$\reg I_{K_n\to H}\leq \reg I_{K_n\to K_{1,s-1}}+(n-1).$$Thus, the assertion follows from the above inequality and the induction hypothesis. 
\end{proof}

We close this section by proposing the following problem.

\begin{Problem}
Characterize all connected graphs $G$ and $H$ for which$$\reg I_{G\to H} =\alpha(H)(|V(G)|-1)+1.$$
\end{Problem}

\section{Directed graphs} \label{sec2}
In this section, we study the ideal of weak graph homomorphisms of directed graphs. 

\begin{Definition} \label{defdir}
Let $G$ and $H$ be directed graphs on vertex sets $[n]$ and $[m]$, respectively. A {\it weak graph homomorphism} from $G$ to $H$ is a map
$\phi:V(G)\to V(H)$ such that for every $(x,y)\in E(G)$, either
$\phi(x)=\phi(y)$ or $(\phi(x),\phi(y))\in E(H)$.

As before, let $\WHom(G,H)$ denote the set of all weak graph homomorphisms from $G$ to $H$, and
let $S=K[x_{ij}: i\in[n],\, j\in[m]]$. For each $\phi\in\WHom(G,H)$, define
\[
u_\phi=\prod_{i=1}^{n}x_{i\phi(i)}.
\]
The \emph{ideal of weak graph homomorphisms from $G$ to $H$} is
\[
I_{G\to H}=(u_\phi:\phi\in\WHom(G,H)).
\]
\end{Definition}

Unless otherwise stated, in this section, the graphs $G$ and $H$ are simple, i.e., their underlying graphs have no loops and multiple edges. 

As the first main result of this section, in Theorem \ref{thm:directedlinres}, we characterize all directed graphs $G$ and $H$ for which $I_{G\to H}$ and it powers have linear resolutions. We first need the following two lemmas. Recall that a directed graph is acyclic if it has no directed cycle as a subgraph.

\begin{Lemma} \label{dircyc}
Let $C$ denote a directed cycle on $[n]$ and lat $H$ be an acyclic directed graph. Then for each $\phi\in \WHom(G,H)$, one has $\phi(1)=\phi(2)=\cdots =\phi(n)$.    
\end{Lemma}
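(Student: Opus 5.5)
The argument is by contradiction: a non-constant weak homomorphism would collapse to a closed walk in $H$ that repeats a vertex, which yields a directed cycle in $H$. (The statement writes $\WHom(G,H)$ where $\WHom(C,H)$ is clearly intended, and I read it that way.) Let $C$ have edges $(1,2),(2,3),\dots,(n-1,n),(n,1)$. Suppose $\phi\in\WHom(C,H)$ is not constant.

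First I consider the sequence $\phi(1),\phi(2),\dots,\phi(n),\phi(1)$. For each edge $(i,i+1)$ of $C$, with indices taken mod $n$, the definition of a weak homomorphism gives either $\phi(i)=\phi(i+1)$ or $(\phi(i),\phi(i+1))\in E(H)$.

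Next I delete consecutive repetitions from this cyclic sequence. Because $\phi$ is not constant, at least one step is a genuine edge. The reduced cyclic sequence is therefore $w_1,w_2,\dots,w_r,w_1$ with $r\geq 2$ and $(w_t,w_{t+1})\in E(H)$ for every $t$, indices mod $r$. This is a closed directed walk in $H$ of positive length. Note that $r=1$ cannot occur: it would mean all values of $\phi$ are equal, and $H$ has no loops in any case.

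Finally, I extract a directed cycle from this closed walk. The standard choice is to take the first vertex repetition along the walk $w_1,\dots,w_r,w_1$. The segment between its two occurrences has distinct vertices and returns to its start, and its length is at least $2$ because $H$ has no loops. This segment is a directed cycle in $H$, contradicting the acyclicity of $H$. Hence $\phi$ is constant.

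The only step that needs some care is the reduction from a closed walk to a directed cycle. It is routine, provided I verify that the length is positive and that no loops occur, and both follow from simplicity.
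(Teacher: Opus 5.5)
Your proof is correct and follows essentially the paper's argument: the image of $C$ under $\phi$ is a closed directed walk in $H$, and acyclicity forces that walk to be trivial. The paper argues edge by edge, using symmetry: the edge $(1,2)$ gives a path of length at most one from $\phi(1)$ to $\phi(2)$, the rest of $C$ gives a walk back, and acyclicity makes both have length zero. You instead treat the whole cycle at once and spell out the extraction of a directed cycle from a closed walk, which the paper leaves implicit; a length-$2$ segment $a\to b\to a$ is in any case excluded by the simplicity of $H$.
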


\begin{proof}
Without loss of generality, we may assume that$$E(C)=\{(1,2), (2,3), \ldots, (n-1,n), (n,1)\}.$$By symmetry, we only show that $\phi(1)=\phi(2)$. Since in $(1,2)\in E(C)$, it follows that in $H$, there must be a directed path $P$ of length at most one from $\phi(1)$ to $\phi(2)$. Similarly, since in $C$ there is a directed path from $2$ to $1$, in $H$ there must be a directed walk $Q$ from $\phi(2)$ to $\phi(1)$. As $H$ is an acyclic graph, it follows that the lengths of $P$ and $Q$ are zero. In particular, $\phi(1)=\phi(2)$.   
\end{proof}

\begin{Lemma} \label{compacy}
Let $H$ be a directed acyclic complete graph on $[m]$. Then there is a relabeling of vertices of $H$ such that $(i,j)$ is an edge of $H$ if and only if $i<j$.    
\end{Lemma}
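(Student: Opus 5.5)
We have to show that a directed acyclic complete graph $H$ on $[m]$ admits a labeling in which $(i,j)\in E(H)$ exactly when $i<j$. Here ``complete'' means that the underlying graph is $K_m$. Since $H$ is simple, for each pair of distinct vertices $u,v$ exactly one of $(u,v)$ and $(v,u)$ is an edge; in other words, $H$ is a tournament. The plan is to show that the edge relation is a strict total order, and then to label the vertices according to this order.

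First we show that $H$ has a source, that is, a vertex with no incoming edge. Suppose every vertex had an in-neighbor. Start at any vertex and repeatedly step backward to an in-neighbor. Since $H$ is finite, some vertex must eventually repeat. Reversing the resulting closed backward walk gives a directed cycle in $H$, which contradicts acyclicity. So a source exists.

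We then proceed by induction on $m$; the case $m=1$ is trivial. For $m\geq 2$, let $v$ be a source of $H$ and give it the label $1$. Every other vertex $u$ is adjacent to $v$ in the underlying complete graph. Since $(u,v)\notin E(H)$, we must have $(v,u)\in E(H)$. The induced subgraph $H\setminus v$ is again acyclic and complete. By the induction hypothesis, its vertices can be labeled $2,\dots,m$ so that $(i,j)$ is an edge exactly when $i<j$.

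Putting this together, $(1,j)\in E(H)$ for every $j\geq 2$, and no edge $(j,1)$ exists. Hence $(i,j)\in E(H)$ if and only if $i<j$ holds on all of $[m]$. There is no real obstacle here. The only point requiring care is that simplicity of $H$ rules out both orientations $(u,v)$ and $(v,u)$ being present. This fact is in any case forced by acyclicity, since the two edges would form a directed $2$-cycle.
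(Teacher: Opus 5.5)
Your proof is correct and is essentially the paper's argument: both induct on $m$ by removing an extremal vertex. The only differences are that the paper removes a sink and labels it $m$, while you remove a source and label it $1$, and that you also justify why such an extremal vertex exists, a point the paper simply asserts.
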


\begin{proof}
We use induction on $m$. There is nothing to prove for $m=1$. So, suppose that $m\geq 2$. Since $H$ is an acyclic complete graph, there is a vertex $x\in V(H)$ such that for any $y\in V(H)\setminus \{x\}$, one has $(y,x)\in H$. We label this vertex by $m$. Then by induction hypothesis, $H\setminus m$ admits the desired labeling, completing the proof. 
\end{proof}

Recall from Introduction that $D_m$ is the directed graph whose underlying graph is the complete graph $K_m$, and $(i,j)$ is an edge of $D_m$ if and only if $i<j$.

\begin{Definition}
Let $G$ be an acyclic directed graph on $[n]$. Then the transitive closure of $G$ is defined to be the new graph with vertex set $[n]$ and edge set$$\{(i,j)\mid i\neq j \ {\rm and\ there \ is \ a \ directed \ path \ from} \ i \ {\rm to} \ j \ {\rm in} \  G\}.$$Since $G$ is acyclic, its transitive closure is a simple directed graph. 
\end{Definition}

\begin{Example} \label{exampath}
Let $P_n$ be the directed path on $[n]$. In other words,$$E(P_n)=\{(i,i+1)\mid 1\leq i\leq n-1\}.$$Then its transitive closure is $D_n$. 
\end{Example}

Note that if $G$ is an acyclic directed graph on $[n]$ with transitive closure $G^{\ast}$, then there is a poset $P$ on $[n]$ such that $i<_P j$ if and only if $(i,j)\in E(G^{\ast})$. For instance, in Example \ref{exampath}, the associated poset is a chain on $[n]$.

Now we state and prove the first main result of this section. Note that if $|V(H)|=1$, then $I_{G\to H}$ is a principal ideal. So, without loss of generality, in the following theorem, we assume that $|V(H)|\geq 2$.

\begin{Theorem}\label{thm:directedlinres}
Let $G$ and $H$ be directed graphs on $[n]$ and $[m]$, respectively. Assume that $m\geq 2$. Then the following conditions are equivalent.

\begin{itemize}
    \item [(i)] $I_{G\to H}^k$ is weakly polymatroidal for all $k\geq 1$.
    \item [(ii)] $I_{G\to H}^k$ is weakly polymatroidal for some $k\geq 1$.
    \item [(iii)] $I_{G\rightarrow H}^k$ has linear quotients for some $k\geq 1$.
    \item [(iv)] $I_{G\to H}^k$ has a linear resolution for some $k\geq 1$.
    \item [(v)] $I_{G\to H}^k$ is linearly related for some $k\geq 1$.
    \item [(vi)] $G$ is totally disconnected, or $G$ is an acyclic graph and $H$ is an acyclic complete graph.
\end{itemize} 
\end{Theorem}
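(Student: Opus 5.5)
The chain (i)$\Rightarrow$(ii)$\Rightarrow$(iii)$\Rightarrow$(iv)$\Rightarrow$(v) is formal. (i)$\Rightarrow$(ii) is trivial. Weakly polymatroidal ideals have linear quotients by \cite[Theorem 12.7.2]{HHbook}. Since $I_{G\to H}^k$ is generated in degree $nk$, linear quotients give a linear resolution, and a linear resolution implies linearly related. So the work lies in (vi)$\Rightarrow$(i) and (v)$\Rightarrow$(vi).

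\textbf{(vi)$\Rightarrow$(i).} If $G$ is totally disconnected, Remark~\ref{rem:1} (which holds verbatim for directed graphs) gives $I_{G\to H}^k=\prod_i\mathfrak{p}_i^k$. This is polymatroidal, hence weakly polymatroidal.

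Otherwise, $G$ is acyclic and, by Lemma~\ref{compacy}, we may take $H=D_m$. A map $\phi$ is then in $\WHom(G,D_m)$ if and only if $\phi(x)\le\phi(y)$ for every edge $(x,y)$, i.e. $\phi$ is order preserving from the poset $P$ attached to the transitive closure of $G$ into the chain $[m]$. Hence $I_{G\to D_m}=L(P,C_m)$ is a generalized Hibi ideal, and its powers are products of such ideals.

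I would prove weak polymatroidality directly. Order the variables $x_{11},\dots,x_{1m},x_{21},\dots$ with the vertices of $G$ listed along a linear extension of $P$. A generator of $I^k$ corresponds, for each vertex $i$, to a sorted multiset $\phi_1(i)\le\dots\le\phi_k(i)$ with each $\phi_t$ order preserving. After sorting, the maps can be assumed to be a chain $\phi_1\le\dots\le\phi_k$ pointwise; this is the standard fact that powers of Hibi-type ideals have sortable generators.

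Now compare two generators $u$ and $v$ at the first differing variable $x_{rs}$ with $a_{rs}>b_{rs}$. Then $v$ uses some larger value $s'>s$ at vertex $r$, and one lowers one copy of $x_{rs'}$ to $x_{rs}$ in $v$. The obstacle is to check that the lowered map stays order preserving. This is arranged by lowering the smallest appropriate $\phi_t(r)$ and using the agreement of $u$ and $v$ on earlier variables, in the style of the known proof for $L(P,Q)$ with $Q$ a chain \cite{fgh}. I expect this bookkeeping to be the main technical step.

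\textbf{(v)$\Rightarrow$(vi).} I argue by contraposition via Theorem~\ref{ref:BHN}, mimicking the proof of Theorem~\ref{Theorem:linres}. Suppose $G$ has an edge and (vi) fails. There are two cases.

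\emph{Case (a): $H$ is not an acyclic complete graph.} Then either some pair $a\neq b$ is nonadjacent, or $H$ contains a directed cycle. In the nonadjacent case, the two constant maps $a$ and $b$ give generators $u_\phi^k$ and $u_\phi^{k-1}u_\psi$. Every generator dividing their lcm uses only $a$ and $b$, so it is constant on each weak component $L$ containing an edge. Exactly as before, this disconnects $G_I^{(u,v)}$.

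If $H$ has all pairs adjacent but contains a directed cycle, it contains a directed triangle or a 2-cycle. The simple hypothesis presumably excludes 2-cycles, so there is a directed triangle $a\to b\to c\to a$. I would restrict to a suitable pair of generators whose lcm forces constancy along an edge. This subcase needs more care: restricting to two values $a,b$ with $(a,b)\in E(H)$ still allows nonconstant maps. I would instead use a vertex $p$ with an edge $(p,q)$ and choose generators $\phi\equiv a$ and $\psi$ with $\psi(p)=b$, $\psi(q)=c$, so that no monomial differs by degree-one steps. I am unsure of the exact construction here.

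\emph{Case (b): $H$ is an acyclic complete graph but $G$ has a directed cycle $C$.} By Lemma~\ref{dircyc}, every weak homomorphism is constant on $C$, whose length is at least $2$. Take the constant maps $1$ and $2$. Any generator dividing the lcm takes values in $\{1,2\}$ and is constant on $C$, so at most one factor sends $C$ to $2$. Two generators that differ in their behaviour on $C$ therefore differ in at least $|C|\ge2$ variables, which again disconnects $G_I^{(u,v)}$. The directed-triangle subcase of (a) is the part I expect to be hardest.
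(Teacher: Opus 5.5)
Your chain of easy implications, your treatment of (vi)$\Rightarrow$(i), and the nonadjacent case of (v)$\Rightarrow$(vi) are all fine. The gap is the one you flag yourself. When $E(G)\neq\emptyset$ and $H$ is complete but contains a directed triangle $a\to b\to c\to a$, you give neither a pair of generators nor a disconnection argument, and this is the only non-routine step of (v)$\Rightarrow$(vi).

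The missing construction, which is the paper's, goes as follows.
\begin{itemize}
\item Fix $(p,q)\in E(G)$ and let $A$ be the set of vertices reachable from $q$ by a directed path in $G$.
\item Put $\xi_1\equiv a$, and let $\xi_2=c$ on $A$ and $\xi_2=b$ off $A$. No edge leaves $A$ and $(b,c)\in E(H)$, so $\xi_2$ is a weak homomorphism.
\item If $G$ is acyclic then $p\notin A$, so $\xi_2(p)=b$ and $\xi_2(q)=c$. This is your local guess, extended to all of $G$.
\item Set $u=u_{\xi_1}^k$ and $v=u_{\xi_1}^{k-1}u_{\xi_2}$. Their lcm is divisible by neither $x_{pc}$ nor $x_{qb}$.
\item Take any path in $G^{(u,v)}_{I^k_{G\to H}}$ starting at $u$, and its first vertex not divisible by $x_{pa}^kx_{qa}^k$. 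It arises from its predecessor by replacing one $x_{qa}$ by $x_{qc}$, or one $x_{pa}$ by $x_{pb}$.
\item In the first case some factor $\theta_t$ has $(\theta_t(p),\theta_t(q))=(a,c)$; in the second, some factor has $(b,a)$. Both are forbidden, because $(c,a),(a,b)\in E(H)$ and $H$ is simple.
\item Hence the component of $u$ consists of generators divisible by $x_{qa}^k$. Since $v$ is not, Theorem~\ref{ref:BHN} gives the contradiction.
\end{itemize}

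This construction needs $G$ to be acyclic, and your case split does not supply that inside Case (a). The repair is to order the steps as the paper does.
\begin{itemize}
\item First show $G$ is acyclic for arbitrary $H$. Your Case (b) argument already does this: it only uses that generators dividing the lcm of the two constant-map generators take values in $\{1,2\}$. The subgraph of $H$ on two vertices is acyclic by simplicity, so Lemma~\ref{dircyc} applies to each factor.
\item Then show $H$ is complete, using your nonadjacent-pair argument.
\item Finally show $H$ is acyclic, using the construction above.
\end{itemize}

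For (vi)$\Rightarrow$(i) the paper simply cites \cite[Theorem 5.1]{kkm}. Your sorted-chain sketch also works if you lower $\psi_t(r)$ for the smallest $t$ with $\psi_t(r)>s$. Then $\psi_t(y)=\phi_t(y)\le\phi_t(r)=s$ for every $y<_P r$, because $u$ and $v$ agree at all such $y$.
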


\begin{proof}
The implications (i)$\Rightarrow$(ii) is obvious. The implications (ii)$\Rightarrow$(iii)$\Rightarrow$(iv)$\Rightarrow$(v) are standard facts. Now, we show that (v) implies (vi). So, suppose that $I_{G\rightarrow H}^k$ is linearly related for some $k\geq 1$. We first show that $G$ is acyclic. By contradiction, suppose that $G$ is not acyclic. Therefore, $G$ has a directed cycle, say $C$, as a subgraph. Without loss of generality, we may assume that $V(C)=[\ell]$, for some integer $\ell$ with $3\leq\ell\leq n$. It follows from Lemma \ref{dircyc} that for each weak homomorphism $\phi\in \WHom(G,H)$, one has $\phi(1)=\phi(2)=\cdots =\phi(\ell)$. Let $\phi_1 : G \to H$ (resp. $\phi_2 : G \to H$) be the weak homomorphism defined by $\phi_1(j)=1$ (resp. $\phi_2(j)=2$), for each $j\in [n]$.  Note that $u_{\phi_1}^k$ and $u_{\phi_1}^{k-1}u_{\phi_2}$ belong to the set of minimal monomial generators of $I_{G\rightarrow H}^k$. Consider the relation graph $G_{I_{G\rightarrow H}^k}^{(u_{\phi_1}^k,u_{\phi_1}^{k-1}u_{\phi_2})}$. Assume that $v$ is a vertex of $G_{I_{Gto H}^k}^{(u_{\phi_1}^k,u_{\phi_1}^{k-1}u_{\phi_2})}$ which belongs to the same connected component as $u_{\phi_1}^k$. Since $m\geq 2$, we deduce from Lemma \ref{dircyc} that $v$ is divisible by $x_{11}^kx_{21}^k\cdots x_{\ell1}^k$. Consequently, $u_{\phi_1}^{k-1}u_{\phi_2}$ does not belong to the same connected component as $u_{\phi_1}^k$. In particular, $G_{I_{G\rightarrow H}^k}^{(u_{\phi_1}^k,u_{\phi_1}^{k-1}u_{\phi_2})}$ is not connected. Hence Theorem \ref{ref:BHN} implies that $I_{G\rightarrow H}^k$ is not linearly related, a contradiction.


If $E(G)=\emptyset$, then we are done. So, assume that $E(G)\neq \emptyset$. We show that $H$ is the complete graph. By contradiction, assume that $H$ is not the complete graph and let $p,q\in V(H)$ be vertices of $H$ with $(p,q), (q,p)\notin E(H)$. Let $\psi_1 : G \to H$ (resp. $\psi_2 : G \to H$) be the weak homomorphism defined by $\phi_1(j)=p$ (resp. $\phi_2(j)=q$), for each $j\in [n]$. Note that $u_{\psi_1}^k$ and $u_{\psi_1}^{k-1}u_{\psi_2}$ belong to the set of minimal monomial generators of $I_{G\rightarrow H}^k$. Consider the relation graph $G_{I_{G\rightarrow H}^k}^{(u_{\psi_1}^k,u_{\psi_1}^{k-1}u_{\psi_2})}$. It is obvious that every vertex of this graph is divisible by $x_{1p}^{k-1}\cdots x_{np}^{k-1}$ and divides $x_{1p}^k\cdots x_{np}^kx_{1q}\cdots x_{nq}$. Since $E(G)\neq \emptyset$, there are two vertices $r,s\in V(G)$ with $(r,s)\in E(G)$. Assume that $v'$ is a vertex of $G_{I_{G\rightarrow H}^k}^{(u_{\psi_1}^k,u_{\psi_1}^{k-1}u_{\psi_2})}$ which belongs to the same connected component as $u_{\psi_1}^k$. Since $(p,q), (q,p)\notin E(H)$, we conclude that $v'$ is divisible by $x_{rp}^kx_{sp}^k$. As a consequence, $u_{\psi_1}^{k-1}u_{\psi_2}$ does not belong to the same connected component as $u_{\psi_1}^k$. In particular, $G_{I_{G\rightarrow H}^k}^{(u_{\psi_1}^k,u_{\psi_1}^{k-1}u_{\psi_2})}$ is not connected. Thus Theorem \ref{ref:BHN} implies that $I_{G\rightarrow H}^k$ is not linearly related, a contradiction. Therefore $H$ is a complete graph.

To complete the proof of (v)$\Rightarrow$(vi), we only need to prove that if $E(G)\neq \emptyset$, then $H$ is acyclic. By contradiction, assume that $H$ has a directed cycle. Let $C'$ denote the shortest directed cycle of $H$. Without loss of generality, we may assume that there is an integer $t$ with $3\leq t\leq m$ such that $V(C')=[t]$ and$$E(C')=\{(1,2), (2,3), \ldots, (t-1,t), (t,1)\}.$$As $H$ has no directed cycle of length $t-1$, we have $(1,3)\notin E(H)$. Since $E(G)\neq \emptyset$, without loss of generality, we may assume that $(1,2)\in E(G)$. Let $A\subseteq V(G)$ be the set of vertices $j$ for which there is a directed path (possibly of length zero) from $2$ to $j$ in $G$ (in particular, $2\in A$). Since $G$ is an acyclic graph, one has $1\notin A$.  Let $\xi_1 : G \to H$ be the weak homomorphism defined by $\xi_1(j)=1$, for each $j\in [n]$. Moreover, let $\xi_2 : G \to H$ be the weak homomorphism defined by $\xi_2(j)=3$ if $j\in A$ and $\xi_2(j)=2$, otherwise. Note that $u_{\xi_1}$ and $u_{\xi_2}$ belong to the set of minimal monomial generators of $I_{G\rightarrow H}$. Thus, $u_{\xi_1}^k$ and $u_{\xi_1}^{k-1}u_{\xi_2}$ belong to the set of minimal monomial generators of $I_{G\rightarrow H}^k$. Consider the relation graph $G_{I_{G\rightarrow H}^k}^{(u_{\xi_1}^k,u_{\xi_1}^{k-1}u_{\xi_2})}$. Note that every vertex of this graph is divisible by $x_{11}^{k-1}\cdots x_{n1}^{k-1}$ and divides$$\frac{x_{11}^k\cdots x_{n1}^kx_{12}\cdots x_{n2}x_{13}\cdots x_{n3}}{x_{13}x_{22}}.$$Moreover, no vertex of $G_{I_{G\rightarrow H}^k}^{(u_{\xi_1}^k,u_{\xi_1}^{k-1}u_{\xi_2})}$ is divisible by $x_{13}x_{22}$. Assume that $v''$ is a vertex of $G_{I_{G\rightarrow H}^k}^{(u_{\xi_1}^k,u_{\xi_1}^{k-1}u_{\xi_2})}$ which belongs to the same connected component as $u_{\xi_1}^k$. Suppose that $v''$ is not divisible by $x_{11}^kx_{21}^k$. Let $Q: u_{\xi_1}^k=w_0, w_1, \ldots, w_h=v''$ be a path between $u_{\xi_1}$ and $v''$ in $G_{I_{G\rightarrow H}^k}^{(u_{\xi_1}^k,u_{\xi_1}^{k-1}u_{\xi_2})}$. Suppose that that $i\in \{1, \ldots, h\}$ is the smallest integer for which $w_i$ is not divisible by $x_{11}^kx_{21}^k$. Therefore, either $x_{11}^kx_{21}^{k-1}\mid w_i$ and $x_{21}^k\nmid w_i$, or $x_{11}^{k-1}x_{21}^k\mid w_i$ and $x_{11}^k\nmid w_i$. We assume the first case, as the argument in the second case is similar. Since $x_{21}^k\nmid w_i$, we must have $x_{23}\mid w_i$. But this is a contradiction, as $(1,2)\in E(G)$ and $(1,3)\notin E(H)$. This contradiction shows that $v''$ is divisible by $x_{11}^kx_{21}^k$. As a consequence, $u_{\xi_1}^{k-1}u_{\xi_2}$ does not belong to the same connected component as $u_{\xi_1}^k$. In particular, $G_{I_{G\rightarrow H}}^{(u_{\xi_1},u_{\xi_2})}$ is not connected, which by Theorem  Theorem \ref{ref:BHN} implies that $I_{G\rightarrow H}^k$ is not linearly related, a contradiction. Hence, $H$ is an acyclic complete graph.

Next, we prove that (vi) implies (i). If $G$ is totally disconnected, then any map $\phi: G\to H$ is a weak homomorphism. This implies that$$I_{G\rightarrow H}=(x_{11}, \ldots, x_{1m})(x_{21}, \ldots, x_{2m})\cdots (x_{n1}, \ldots, x_{nm}),$$ which is a transversal matroidal ideal. As a consequence, $I_{G\rightarrow H}^k$ is a polymatroidal ideal for each integer $k\geq 1$. Now, suppose that $G$ is an acyclic graph and $H$ is an acyclic complete graph. By Lemma \ref{compacy}, we may assume that $H=D_m$. Let $G^{\ast}$ denote the transitive closure of $G$. Since $G$ is acyclic, $G^{\ast}$ is a simple directed graph. Furthermore, it follows from the structure of $D_m$  that $I_{G\rightarrow D_m}=I_{G^{\ast}\rightarrow D_m}$. Moreover, there is a poset $P$ on $[n]$ such that $i<_P j$ if and only if $(i,j)\in E(G^{\ast})$. Thus, $I_{G\rightarrow H}=I_{G^{\ast}\rightarrow D_m}$ is the ideal of poset homomorphisms defined by $P$ and the chain on $[m]$. Hence, we deduce from \cite[Theorem 5.1]{kkm} that $I_{G\rightarrow D_m}^k$ is weakly polymatroidal for each integer $k\geq 1$.
\end{proof}

In the following proposition, we compute the height of $I_{G\to H}$.

\begin{Proposition} \label{htdir}
Let $G$ and $H$ be directed graphs. Then $\height I_{G\rightarrow H}=|V(H)|$.  
\end{Proposition}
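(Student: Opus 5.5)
The plan is to repeat the proof of Proposition~\ref{prop:height} almost word for word. The only directed-graph fact needed is that constant maps are weak homomorphisms. Let $V(G)=[n]$ and $V(H)=[m]$. For the upper bound, fix any vertex $i\in V(G)$ and set $\mathfrak{p}_i=(x_{ij}:j\in[m])$. Every generator $u_\phi=\prod_{v=1}^n x_{v\phi(v)}$ is divisible by $x_{i\phi(i)}\in\mathfrak{p}_i$, so $I_{G\to H}\subseteq\mathfrak{p}_i$. Since $\mathfrak{p}_i$ is a prime ideal of height $m$, this gives $\height I_{G\to H}\leq m$.

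For the lower bound, let $\mathfrak{p}$ be any prime ideal containing $I_{G\to H}$. Because $I_{G\to H}$ is a monomial ideal, its minimal primes are monomial primes, so we may assume $\mathfrak{p}$ is generated by variables. For each $j\in[m]$, consider the constant map $\phi_j(v)=j$. For every edge $(x,y)\in E(G)$ we have $\phi_j(x)=\phi_j(y)$, so $\phi_j\in\WHom(G,H)$ by Definition~\ref{defdir}. Hence
\[
u_{\phi_j}=\prod_{i=1}^n x_{ij}\in\mathfrak{p}.
\]
Since $\mathfrak{p}$ is prime, it contains some variable $x_{i_j j}$. These variables are pairwise distinct for distinct $j$, because their second indices differ. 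Therefore $\mathfrak{p}$ contains at least $m$ distinct variables, and $\height\mathfrak{p}\geq m$. Combining the two bounds gives $\height I_{G\to H}=m=|V(H)|$.

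There is no real obstacle here. The only point to check is that the directed definition still admits constant maps, and it does, since the condition $\phi(x)=\phi(y)$ is satisfied trivially regardless of the orientation or structure of $H$.
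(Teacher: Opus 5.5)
Your proof is correct and is essentially the paper's argument: the upper bound comes from $I_{G\to H}\subseteq (x_{11},\ldots,x_{1m})$, and the lower bound from the $m$ constant-map monomials $\prod_{i=1}^n x_{ij}$ lying in $I_{G\to H}$. The paper packages the lower bound as the containment of the height-$m$ complete intersection generated by these monomials rather than arguing prime by prime, but this is the same idea, and your version is exactly the proof of Proposition~\ref{prop:height}.
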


\begin{proof}
Assume that $V(G)=[n]$, $V(H)=[m]$. Since$$I_{G\rightarrow H}\subseteq (x_{11}, x_{12}, \ldots, x_{1m}),$$we deduce that $\height I_{G\rightarrow H}\leq m$. On the other hand, since the complete intersection$$(x_{11}x_{21}\cdots, x_{n1}, x_{12}x_{22}\cdots, x_{n2}, \ldots, x_{1m}x_{2m}\cdots, x_{nm})$$is contained in $I_{G\rightarrow H}$, it follows that $\height I_{G\rightarrow H}\geq m$.
\end{proof}

Next, we study the projective dimension and the regularity of (symbolic) powers of ideals of weak graph homomorphisms of directed graphs. As the proofs are similar to the corresponding results for undirected graphs, we omit the proofs. 

A subgraph $H'$ of a directed graph $H$ is called an {\it induced subgraph} if for any two vertices $i,j\in V(H')$, one has $(i,j)\in E(H')$ if and only if $(i,j)\in E(H)$.

\begin{Lemma} \label{restr}
Let $G$ and $H$ be directed graphs and let $H'$ be an induced subgraph of $H$. Then for any $i,j\geq 0$ and for any integer $k\geq 1$, one has
\begin{itemize}
    \item [(i)] $\beta_{i,j}(I_{G\rightarrow H'}^k)\leq\beta_{i,j}(I_{G\rightarrow H}^k)$.
    \item [(ii)] $\beta_{i,j}(I_{G\rightarrow H'}^{(k)})\leq\beta_{i,j}(I_{G\rightarrow H}^{(k)})$.
\end{itemize}
\end{Lemma}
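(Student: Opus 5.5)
The plan is to mirror the proof of Lemma~\ref{restriction}, working in the directed setting. Let $V(G)=[n]$, and let $H'$ be an induced subgraph of $H$. Set
\[
M:=\prod_{i\in V(G)}\prod_{j\in V(H')}x_{ij}.
\]
The first step is to establish the two identities
\[
I_{G\to H'}^k=(I_{G\to H}^k)^{\leq M^k} \qquad\text{and}\qquad I_{G\to H'}^{(k)}=(I_{G\to H}^{(k)})^{\leq M^k}.
\]
Once these hold, \cite[Lemma~4.4]{HHZ} shows that the minimal multigraded free resolution of the restricted ideal is a subcomplex of the minimal multigraded free resolution of the original ideal. Both inequalities on Betti numbers then follow at once.

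For the ordinary powers, I first check that $\WHom(G,H')$ is exactly the set of maps $\phi\in\WHom(G,H)$ with image contained in $V(H')$. This is where inducedness is used. For an edge $(x,y)\in E(G)$ with $\phi(x)\neq\phi(y)$, the condition $(\phi(x),\phi(y))\in E(H)$ is equivalent to $(\phi(x),\phi(y))\in E(H')$, because both endpoints lie in $V(H')$. Next, a minimal generator $u_{\phi_1}\cdots u_{\phi_k}$ of $I_{G\to H}^k$ divides $M^k$ if and only if every $\phi_t$ has image in $V(H')$. Indeed, each $u_{\phi_t}$ contributes a variable $x_{i\phi_t(i)}$, and this variable divides $M$ only if $\phi_t(i)\in V(H')$. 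Since the generators of $(I_{G\to H}^k)^{\leq M^k}$ can be taken among the minimal generators of $I_{G\to H}^k$ dividing $M^k$, the first identity follows.

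For the symbolic powers, I will use the fact that $I^{(k)}$ is determined by the primary decomposition. Let $S'$ be the subring in the variables $x_{ij}$ with $j\in V(H')$. Setting every $x_{ij}$ with $j\notin V(H')$ equal to zero maps $I_{G\to H}$ onto $I_{G\to H'}$. Equivalently, $I_{G\to H'}S=(I_{G\to H}+Q)\cap S'$ extended, where $Q$ is generated by the variables outside $S'$. I then restrict the decomposition $I=\bigcap\mathfrak p$: a monomial $w\in S'$ lies in $\mathfrak p^k$ if and only if it lies in $(\mathfrak p\cap S')^k$. So $I_{G\to H}^{(k)}\cap S'=\bigcap(\mathfrak p\cap S')^k$, and the primes $\mathfrak p\cap S'$ that are minimal give the minimal primes of $I_{G\to H}\cap S'=I_{G\to H'}$. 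Non-minimal ones are redundant, since $\mathfrak q\subseteq\mathfrak p$ implies $\mathfrak q^k\subseteq\mathfrak p^k$. This gives $I_{G\to H'}^{(k)}=I_{G\to H}^{(k)}\cap S'$.

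Finally, a monomial of $I_{G\to H}^{(k)}\cap S'$ dividing $M^k$ lies in the restriction. Conversely, every minimal generator of $I_{G\to H'}^{(k)}$ has each variable exponent at most $k$, because the defining primes are generated by variables. Hence it divides $M^k$, which yields the second identity. I expect this symbolic-power identity to be the only step needing real care; the argument above is the same as in the undirected case, where the paper says the identity is easy.
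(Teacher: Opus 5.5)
Your proposal is correct and follows the paper's route: the paper's proof (via Lemma~\ref{restriction}) uses the same monomial $M$, the same two identities $I_{G\to H'}^k=(I_{G\to H}^k)^{\leq M^k}$ and $I_{G\to H'}^{(k)}=(I_{G\to H}^{(k)})^{\leq M^k}$, and \cite[Lemma~4.4]{HHZ}. The only difference is that you actually verify these identities, which the paper dismisses as easy to see; your symbolic-power argument, via $I_{G\to H}^{(k)}\cap S'=I_{G\to H'}^{(k)}$ and the bound $k$ on exponents of minimal generators, is sound.
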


\begin{proof}
The proof is similar to that of Lemma \ref{restriction}.
\end{proof}

\begin{Theorem} \label{regdirect}
Let $G$ and $H$ be directed graphs and let $c$ denote the number of connected components of $G$. Then for any integer $k\geq 1$, one has
\begin{itemize}
    \item [(i)] $\projdim I_{G\rightarrow H}^k\geq |V(H)|-1$.
    \item [(ii)] $\projdim I_{G\rightarrow H}^{(k)}\geq |V(H)|-1$.
    \item [(iii)] $\reg I_{G\rightarrow H}^k\geq k|V(G)|+(|V(G)|-c)(\alpha(H)-1)$.
    \item [(iv)] $\reg I_{G\rightarrow H}^{(k)}\geq k|V(G)|+(|V(G)|-c)(\alpha(H)-1)$.
\end{itemize}
\end{Theorem}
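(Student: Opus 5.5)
We will mirror the proof of Theorem~\ref{thm:regundir}. For parts (i) and (ii), Proposition~\ref{htdir} gives $\height I_{G\to H}=|V(H)|$. The radicals of $I_{G\to H}^k$ and of $I_{G\to H}^{(k)}$ both equal $I_{G\to H}$, so these ideals have height $|V(H)|$ as well. Hence $\projdim S/I_{G\to H}^k\geq |V(H)|$, and likewise for the symbolic power. This gives $\projdim I_{G\to H}^k\geq |V(H)|-1$ and $\projdim I_{G\to H}^{(k)}\geq |V(H)|-1$.

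For (iii) and (iv), we first treat connected $G$, following Lemma~\ref{lem:regconundir}. Let $\alpha=\alpha(H)$, where independence in a directed graph means that no edge in either direction joins two vertices of the set. Choose an independent set $A=\{j_1,\ldots,j_\alpha\}$ of $H$ and let $H'$ be the induced subgraph on $A$, which has no edges. If $\phi\in\WHom(G,H')$ and $(x,y)\in E(G)$, then $\phi(x)=\phi(y)$, since $(\phi(x),\phi(y))$ cannot be an edge. Because the underlying graph of $G$ is connected, $\phi$ is constant. Thus
\[
I_{G\to H'}=(u_1,\ldots,u_\alpha),\qquad u_t=\prod_{i\in V(G)}x_{ij_t},
\]
which is a complete intersection of $\alpha$ monomials of degree $|V(G)|$ with disjoint supports. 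Therefore $I_{G\to H'}^k=I_{G\to H'}^{(k)}$, and \cite[Lemma 4.4]{BHT} gives
\[
\reg I_{G\to H'}^k\geq k|V(G)|+(|V(G)|-1)(\alpha-1).
\]
Lemma~\ref{restr} then transfers this bound to $I_{G\to H}^k$ and $I_{G\to H}^{(k)}$. To apply it to symbolic powers, we need $(I_{G\to H}^{(k)})^{\leq M^k}=I_{G\to H'}^{(k)}$ with $M=\prod_{i\in V(G),\,j\in A}x_{ij}$. This is implicitly part of Lemma~\ref{restr}, whose proof is the same as that of Lemma~\ref{restriction}.

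For general $G$ with connected components $G_1,\ldots,G_c$, we induct on $c$. Write $G=G_1\sqcup G'$. A map on $V(G)$ is a weak homomorphism exactly when its restrictions to $G_1$ and to $G'$ are. Hence $I_{G\to H}=I_{G_1\to H}I_{G'\to H}$, and since the two factors live in disjoint sets of variables, this product equals $I_{G_1\to H}\cap I_{G'\to H}$. It follows that $I_{G\to H}^k=I_{G_1\to H}^kI_{G'\to H}^k$.

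For symbolic powers, the minimal primes of $I_{G\to H}$ are the minimal primes of the two factors. So $I_{G\to H}^{(k)}=I_{G_1\to H}^{(k)}\cap I_{G'\to H}^{(k)}$, which again equals the product because the variables are disjoint. By \cite[Lemma 3.2]{HT}, regularity is additive for products of ideals in disjoint variables. Adding the bound for $G_1$ (with $|V(G_1)|-1$) to the inductive bound for $G'$ (with $|V(G')|-(c-1)$) gives the stated inequality.

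The only step needing genuine care is identifying the restriction $(I_{G\to H}^{(k)})^{\leq M^k}$ with $I_{G\to H'}^{(k)}$ for symbolic powers. We will check it via primary decompositions: a prime $\mathfrak p$ is minimal over $I_{G\to H'}$ exactly when it is the restriction to the variables $x_{ij}$, $j\in A$, of some minimal prime of $I_{G\to H}$. Equivalently, we set the variables outside the support of $M$ equal to $1$ and observe that this localization commutes with taking symbolic powers. All other steps are routine adaptations of the undirected case.
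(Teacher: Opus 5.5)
Your proposal follows the paper's own proof. The paper only says the argument is the same as for Theorem~\ref{thm:regundir}: the height bound from Proposition~\ref{htdir} for (i)--(ii); restriction to an independent set of $H$ together with \cite[Lemma 4.4]{BHT} and Lemma~\ref{restr} for connected $G$; and induction on $c$ via $I_{G\to H}=I_{G_1\to H}I_{G'\to H}$ and \cite[Lemma 3.2]{HT}. Your version of each of these steps is correct.

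The one thing to repair is your last paragraph's justification of $(I_{G\to H}^{(k)})^{\leq M^k}=I_{G\to H'}^{(k)}$, which the paper simply calls easy.

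\emph{Wrong operation.} Setting the variables outside the support of $M$ equal to $1$ does not work. For $j\notin A$, the constant map with value $j$ gives a generator supported entirely on those variables, so that localization is the unit ideal. What you want is to set them to $0$, i.e.\ intersect with $S_A=K[x_{ij}: j\in A]$.

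\emph{Overstated claim.} Your ``exactly when'' is too strong. Take $G$ the single edge $(1,2)$, $H$ the path $i\to a\to j$, and $A=\{a\}$. Then $(x_{1a},x_{2a},x_{2i},x_{1j})$ is a minimal prime of $I_{G\to H}$, but its restriction $(x_{1a},x_{2a})$ is not minimal over $I_{G\to H'}=(x_{1a}x_{2a})$.

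\emph{What suffices.} You only need a weaker fact.
\begin{enumerate}
\item Since $H'$ is induced, $I_{G\to H}\cap S_A=I_{G\to H'}$.
\item With $\mathfrak q$ ranging over the minimal primes of $I_{G\to H}$, this gives $I_{G\to H'}=\bigcap_{\mathfrak q}(\mathfrak q\cap S_A)$ and $I_{G\to H}^{(k)}\cap S_A=\bigcap_{\mathfrak q}(\mathfrak q\cap S_A)^k$.
\item Every minimal prime of $I_{G\to H'}$ occurs among the $\mathfrak q\cap S_A$, and every $\mathfrak q\cap S_A$ contains one. So the non-minimal terms are redundant, and the intersection is $I_{G\to H'}^{(k)}$.
\item Minimal generators of these symbolic powers have all exponents at most $k$, so this ideal is exactly $(I_{G\to H}^{(k)})^{\leq M^k}$.
\end{enumerate}
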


\begin{proof}
The proof is similar to that of Theorem \ref{thm:regundir}.
\end{proof}

A natural question about the ideal of weak graph homomorphisms is to characterize all directed graphs $G$ and $H$ for which $I_{G\to H}$ is unmixed. However, answering this question seems difficult. So, we restrict ourselves to the case $H=D_m$. We first need the following lemma.

\begin{Lemma} \label{parti}
Let $\ell$ and $m$ be positive integers and let $A_1, \ldots, A_m$ be disjoint subsets of $[\ell]^m$ such that $A_1\cup\cdots\cup A_m=[\ell]^m$. Then there are $k\in [m]$ and vectors $\mathbf{a}_1, \ldots \mathbf{a}_{\ell}\in A_k$ such that the $k$th component of $\mathbf{a}_j$ is $j$, for each $j=1, \ldots, \ell$.   
\end{Lemma}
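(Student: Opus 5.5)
We argue by contradiction, in the style of a Hales--Jewett or hex-type covering argument. Suppose the conclusion fails. Then for every $k\in[m]$ there is a value $c_k\in[\ell]$ such that no vector of $A_k$ has $k$th component equal to $c_k$. In other words, $A_k$ lies inside the set $\{\mathbf{a}\in[\ell]^m: a_k\neq c_k\}$.

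Now consider the vector $\mathbf{c}=(c_1,\ldots,c_m)\in[\ell]^m$. Since $A_1\cup\cdots\cup A_m=[\ell]^m$, the vector $\mathbf{c}$ belongs to some $A_k$. Its $k$th component is $c_k$, which contradicts the choice of $c_k$.

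So there is some $k$ such that, for every $j\in[\ell]$, the set $A_k$ contains a vector whose $k$th component is $j$. Choosing one such vector $\mathbf{a}_j$ for each $j$ gives the required vectors $\mathbf{a}_1,\ldots,\mathbf{a}_\ell\in A_k$.

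The plan is therefore: negate the statement, record for each $k$ a missing value $c_k$ of the $k$th coordinate over $A_k$, and assemble these values into a single vector that no $A_k$ can contain. The one step needing care is this assembly into $\mathbf{c}$; it works because the witnesses for different $k$ constrain different coordinates, so they can be chosen independently and combined into one vector. Disjointness of the $A_k$ is never used; only the covering property matters.
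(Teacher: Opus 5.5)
Your proof is correct, and it starts exactly as the paper's does: negate the conclusion and choose, for each $k$, a value $c_k$ (the paper's $j_k$) that never appears as the $k$th coordinate of a vector in $A_k$. The paper then sets $B_k=\{\mathbf{a}\in[\ell]^m: a_k\neq j_k\}\supseteq A_k$ and proves by induction that $|B_1\cup\cdots\cup B_m|=\ell^m-1$, contradicting the covering. You instead exhibit the uncovered vector $\mathbf{c}=(c_1,\ldots,c_m)$ directly. That vector is precisely the single element the paper's count shows is missing from $B_1\cup\cdots\cup B_m$, so your argument is a shorter, non-numerical version of the same idea. Your remark that disjointness is never needed applies equally to the paper's proof.
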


\begin{proof}
By contradiction, assume that for each $k\in [m]$, there is an integer $j_k\in [\ell]$ such that for any vectors $\mathbf{a}\in A_k$, the $k$th component of $\mathbf{a}$ is not equal to $j_k$. We show that for each $i=1, \ldots, m$, the inequality
\[
\begin{array}{rl}
|A_1\cup A_2\cup\cdots \cup A_i|\leq (\ell-1)\ell^{m-1}+(\ell-1)\ell^{m-2}+\cdots + (\ell-1)\ell^{m-i}
\end{array} \tag{2} \label{2}
\] 
holds. To prove the above inequality, for each $k=1, \ldots, m$, let $B_k$ denote the set of vectors $\mathbf{a}\in [\ell]^m$ such that the $k$th component of $\mathbf{a}$ is not equal to $j_k$. Therefore, $A_k\subseteq B_k$. We prove something stronger than inequality (\ref{2}). Indeed, using induction on $i$, we prove that for each $i=1, \ldots, m$, 
\[
\begin{array}{rl}
|B_1\cup B_2\cup\cdots \cup B_i|=(\ell-1)\ell^{m-1}+(\ell-1)\ell^{m-2}+\cdots + (\ell-1)\ell^{m-i}.
\end{array} \tag{3} \label{3}
\]  
Since $A_1\cup A_2\cup\cdots \cup A_i\subseteq B_1\cup B_2\cup\cdots\cup B_i$, the inequality (\ref{2}) follows from the above equality.

We first prove  equality (\ref{3}) for $i=1$. Indeed, by definition of $B_1$, for any vector $\mathbf{a}\in B_1$, there are $\ell-1$ possibilities for the first component of $\mathbf{a}$ and $\ell$ possibilities for the other components. Therefore,$$|B_1|=(\ell-1)\ell^{m-1}.$$Next, suppose that $i\geq 2$. Note that for any vector $\mathbf{a}$ in $B_i\setminus (B_1\cup\cdots\cup B_{i-1})$ and for each $k=1, \ldots, i-1$, the $k$th component of $\mathbf{a}$ is $j_k$, and the $i$th component of $\mathbf{a}$ is not $j_i$. Consequently,$$|B_i\setminus (B_1\cup\cdots\cup B_{i-1})|=(\ell-1)\ell^{m-i}.$$
Hence, the above equality and the induction hypothesis imply that
\begin{align*}
&|B_1\cup B_2\cup\cdots\cup B_i|=|B_1\cup B_2\cup\cdots\cup B_{i-1}|+|B_i\setminus (B_1\cup\cdots\cup B_{i-1})|\\ & = (\ell-1)\ell^{m-1}+(\ell-1)\ell^{m-2}+\cdots + (\ell-1)\ell^{m-i+1}+ (\ell-1)\ell^{m-i}.
\end{align*}

Finally, inequality (\ref{2}) for $i=m$ implies that
\begin{align*}
|A_1\cup A_2\cup\cdots \cup A_m|\leq (\ell-1)\ell^{m-1}+(\ell-1)\ell^{m-2}+\cdots + (\ell-1)=\ell^m-1.
\end{align*}
This contradicts $A_1\cup\cdots\cup A_m=[\ell]^m$.
\end{proof}

\begin{Definition} \label{defshrink}
Let $G$ be a directed graph on $[n]$ and let $C$ be a directed cycle of $G$. The graph obtained by shrinking $C$ is the graph $G'$ with vertex set $([n]\setminus V(C))\cup \{0\}$, (where $0$ is a new vertex) and edge set
\begin{align*}
 E(G') & =E(G\setminus V(C))\cup \big\{(0,i)\mid i\notin V(C) \ {\rm and} \ (j,i)\in E(G) \ {\rm for \ some} \ j\in V(C)\big\}\\ & \cup\big\{(i,0)\mid i\notin V(C) \ {\rm and} \ (i,j)\in E(G) \ {\rm for \ some} \ j\in V(C)\big\}.
\end{align*}
\end{Definition}

\begin{Example} \label{exshrink}
Let $G_1$ be the first graph in Figure \ref{fig:shrinking}. The graph obtained from $G_1$ by shrinking its triangle is graph $G_2$. Also, the graph obtained from $G_1$ by shrinking its $4$-cycle is $G_3$.

\begin{figure}[h]
    \centering
    \subfloat[$G_1$]{\includegraphics[scale=0.55]{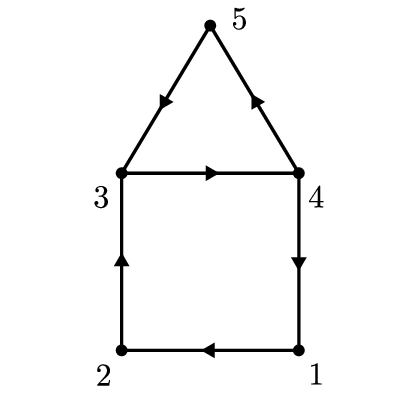}}
    \subfloat[$G_2$]{\includegraphics[scale=0.55]{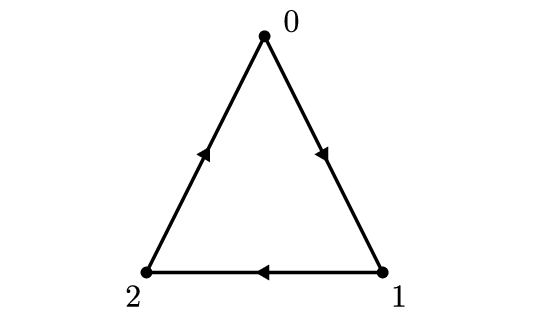}}
    \subfloat[$G_3$]{\includegraphics[scale=0.55]{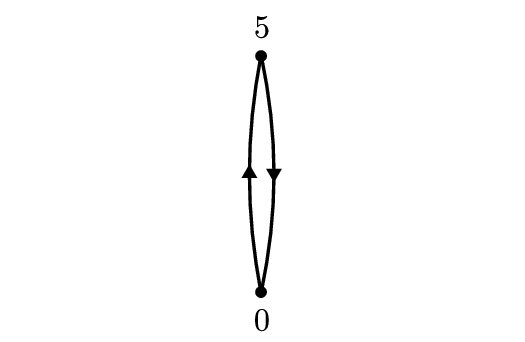}}
    \caption{$G_2$ and $G_3$ comes from $G_1$ by shrinking the $3$-cycle and the $4$-cycle, respectively.}
    \label{fig:shrinking}
\end{figure}

\end{Example}

As Example \ref{exshrink} shows even if $G$ is a simple graph, the graph $G'$ obtained by shrinking a directed cycle of $G$ is not necessarily simple. Indeed, for some vertex $i\in [n]\setminus V(C)$, both edges $(0,i)$ and $(i,0)$ may appear in $G'$. However, the definition of $I_{G\to H}$ makes sense when the underlying graph of $G$ has parallel edges. Moreover, after finitely many times shrinking the directed cycles, one obtains an acyclic directed graph.

\begin{Lemma} \label{shrinkunmix}
Let $G$ and $H$ be directed graphs such that $H$ is acyclic and $G$ has a directed cycle $C$. Let $G'$ denote the (not necssarily simple) graph obtained from $G$ by shrinking $C$. If $I_{G'\to H}$ is unmixed, then $I_{G\to H}$ is unmixed too.    
\end{Lemma}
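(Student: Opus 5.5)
The plan is to show that $I_{G\to H}$ is obtained from $I_{G'\to H}$ by a substitution of variables, and that this substitution sends minimal primes to minimal primes of the same height. Write $V(C)=\{c_1,\dots,c_t\}$.

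\textbf{Step 1: the weak homomorphisms.} Since $H$ is acyclic, Lemma~\ref{dircyc} (applied to the subgraph $C$) shows that every $\phi\in\WHom(G,H)$ is constant on $V(C)$. Hence $\phi$ induces a map $\phi'$ on $V(G')$ by setting $\phi'(0)=\phi(c_1)$ and $\phi'(i)=\phi(i)$ otherwise. Comparing $E(G')$ with $E(G)$ gives $\phi'\in\WHom(G',H)$. Conversely, each $\psi\in\WHom(G',H)$ lifts to some $\phi\in\WHom(G,H)$ by sending all of $V(C)$ to $\psi(0)$. Therefore the ring map
\[
x_{0j}\mapsto \prod_{s=1}^t x_{c_sj}, \qquad x_{ij}\mapsto x_{ij} \quad (i\notin V(C)),
\]
carries $\cG(I_{G'\to H})$ bijectively onto $\cG(I_{G\to H})$. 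So $I_{G\to H}$ is obtained from $I_{G'\to H}$ by replacing each variable $x_{0j}$ by the squarefree monomial $y_j=x_{c_1j}\cdots x_{c_tj}$. These monomials have pairwise disjoint supports and are disjoint from all other variables.

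\textbf{Step 2: minimal primes.} I would show that the minimal primes of $I_{G\to H}$ are exactly the ideals obtained from some $\mathfrak{q}\in\Min(I_{G'\to H})$ as follows. Each generator $x_{0j}\in\mathfrak{q}$ is replaced by a single variable $x_{c_sj}$, with $s$ chosen independently for each $j$; the variables $x_{ij}$ with $i\neq 0$ are kept. There are two directions.
\begin{itemize}
\item[(a)] Let $\mathfrak{p}\supseteq I_{G\to H}$ be a monomial prime. Define $\mathfrak{q}$ to contain $x_{0j}$ if and only if $\mathfrak{p}$ contains some $x_{c_sj}$, and to contain $x_{ij}$ ($i\notin V(C)$) if and only if $x_{ij}\in\mathfrak{p}$. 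Then $\mathfrak{q}\supseteq I_{G'\to H}$, because a generator $u_\psi$ lies in $\mathfrak{q}$ exactly when its image $u_\phi$ lies in $\mathfrak{p}$.
\item[(b)] Conversely, any substituted prime built from $\mathfrak{q}\in\Min(I_{G'\to H})$ contains $I_{G\to H}$. It is minimal: dropping a variable $x_{c_sj}$ or $x_{ij}$ would yield a smaller prime whose associated $\mathfrak{q}$ is strictly smaller than the original $\mathfrak{q}$, which contradicts minimality.
\end{itemize}
Combining (a) and (b), minimality of $\mathfrak{p}$ forces $\mathfrak{p}$ to contain at most one $x_{c_sj}$ for each $j$. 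Moreover, the associated $\mathfrak{q}$ must be minimal, so $\mathfrak{p}$ is exactly a substituted prime of a minimal $\mathfrak{q}$.

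\textbf{Step 3: heights.} The substitution replaces each generator with exactly one variable, so it preserves the number of generators, i.e.\ the height. Since $I_{G'\to H}$ is unmixed, every minimal prime of $I_{G\to H}$ has the same height, namely $|V(H)|$ by Proposition~\ref{htdir}. Minimal primes suffice here because the ideal is squarefree.

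\textbf{Main obstacle.} The main care lies in the minimality argument of Step 2: showing that a minimal $\mathfrak{p}$ uses exactly one $x_{c_sj}$ per $j$, and that the associated $\mathfrak{q}$ is minimal. Alternatively, one could cite the standard fact that substituting disjoint squarefree monomials for variables transforms minimal primes in exactly this way. A second point to watch is that $G'$ may have parallel edges. This causes no trouble, since weak homomorphisms are defined edgewise.
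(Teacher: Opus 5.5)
Your proof is correct and ends with the same primes as the paper, but you reach them by a different key step. The paper never writes $I_{G\to H}$ as a substitution of $I_{G'\to H}$. Instead, for each $\mathbf a\in[\ell]^m$ it takes the variable renaming $f_{\mathbf a}\colon x_{0q}\mapsto x_{a_qq}$ and proves the ideal identity $I_{G\to H}=\bigcap_{\mathbf a}f_{\mathbf a}(I_{G'\to H})$. The inclusion $\supseteq$ is a monomial-level argument that rests on the pigeonhole-type Lemma~\ref{parti}, applied to the partition of $[\ell]^m$ by the value $\psi_{\mathbf a}(0)$. The paper then pushes a primary decomposition of $I_{G'\to H}$ through every $f_{\mathbf a}$. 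This gives a possibly redundant decomposition of $I_{G\to H}$ into primes $f_{\mathbf a}(\mathfrak p_i)$ of the common height, so minimality never has to be checked.

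You instead use the bijection $\WHom(G,H)\leftrightarrow\WHom(G',H)$ to see that $I_{G\to H}$ is generated by the image of $I_{G'\to H}$ under $x_{0j}\mapsto x_{c_1j}\cdots x_{c_tj}$. You then compare primes over the two ideals directly, and your substituted primes are exactly the paper's $f_{\mathbf a}(\mathfrak q)$. Working with primes rather than monomials lets you drop Lemma~\ref{parti} entirely. It also gives a little more: an exact description of $\Min(I_{G\to H})$, in which every $f_{\mathbf a}(\mathfrak q)$ with $\mathfrak q\in\Min(I_{G'\to H})$ is actually minimal. Your Step~1 is also exactly what justifies the claim, made without proof in Theorem~\ref{pddirected}, that $I_{G\to D_m}$ and $I_{G'\to D_m}$ have the same LCM lattice.

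When you write up the last sentence of Step~2, spell the argument out:
\begin{enumerate}
\item Let $\mathfrak p$ be a minimal prime of $I_{G\to H}$, and pick $\mathfrak q_0\in\Min(I_{G'\to H})$ contained in the prime $\mathfrak q$ associated to $\mathfrak p$.
\item Build a substituted prime of $\mathfrak q_0$ inside $\mathfrak p$, choosing for each $x_{0j}\in\mathfrak q_0$ a variable $x_{c_sj}$ that already lies in $\mathfrak p$.
\item By (b) this prime contains $I_{G\to H}$, so minimality of $\mathfrak p$ forces equality.
\end{enumerate}
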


\begin{proof}
Let $V(G)=[n]$ and $V(H)=[m]$. Without loss of generality, we may assume that $V(C)=[\ell]$, for some integer $\ell$ with $3\leq\ell\leq n$. and$$E(C)=\{(1,2), (2,3), \ldots, (\ell-1,\ell), (\ell,1)\}.$$Assume that $V(G')=\{0\}\cup\{\ell+1, \ldots, n\}$, where $0$ is the new vertex obtained from shrinking the cycle $C$. Set $S':=K[x_{ij} \mid i\in V(G'), j\in [m]]$. In particular, $I_{G'\rightarrow H}$ is an ideal of $S'$. Let$$I_{G'\rightarrow H}=\mathfrak{p}_1\cap \mathfrak{p}_2\cap\cdots \cap \mathfrak{p}_t$$be the  irredundant primary decomposition of $I_{G'\rightarrow H}$, where $\mathfrak{p}_1, \ldots, \mathfrak{p}_t$ are monomial prime ideals of $S'$. It follows from the assumption that $\height\mathfrak{p}_i=\height\mathfrak{p}_j$, for each pair of integers $1\leq i\leq j\leq t$. For each vector $\mathbf{a}=(a_1, \ldots, a_m)\in [\ell]^m$, let $f_{\mathbf{a}}: S'\longrightarrow S$ be the homomorphism defined by
\[
f_{\mathbf{a}}(x_{pq})=
\begin{cases}
  x_{a_qq} & \text{if $p=0$},\\
  x_{pq} & \text{if $p\neq 0$}.
\end{cases}
\]

We show that $I_{G\rightarrow H}\subseteq f_{\mathbf{a}}(I_{G'\rightarrow H})$. Indeed, fix a weak homomorphism $\phi\in \WHom (G,H)$. We know from Lemma \ref{dircyc} that $\phi(1)=\phi(2)=\cdots =\phi(\ell)$. We denote this common value with $b$. Therefore,$$u_{\phi}=x_{1b}\cdots x_{\ell b}x_{\ell+1\phi(\ell+1)}\cdots x_{n\phi(n)}.$$As a consequence, $x_{0b}x_{\ell+1\phi(\ell+1)}\cdots x_{n\phi(n)}\in I_{G'\rightarrow H}$. Note that$$f_{\mathbf{a}}(x_{0b}x_{\ell+1\phi(\ell+1)}\cdots x_{n\phi(n)})=x_{a_bb}x_{\ell+1\phi(\ell+1)}\cdots x_{n\phi(n)}$$divides $x_{1b}\cdots x_{\ell b}x_{\ell+1\phi(\ell+1)}\cdots x_{n\phi(n)}=u_{\phi}$. Thus, $u_{\phi}\in f_{\mathbf{a}}(I_{G'\rightarrow H})$ which implies that $I_{G\rightarrow H}\subseteq f_{\mathbf{a}}(I_{G'\rightarrow H})$. This yields that$$I_{G\rightarrow H}\subseteq \bigcap_{\mathbf{a}\in [\ell]^m}f_{\mathbf{a}}(I_{G'\rightarrow H}).$$We show that the reverse inclusion holds too. Choose an arbitrary monomial $v\in \bigcap_{\mathbf{a}\in [\ell]^m}f_{\mathbf{a}}(I_{G'\rightarrow H})$. Then for each vector $\mathbf{a}\in [\ell]^m$ there is a weak homomorphism $\psi_{\mathbf{a}}\in \WHom (G',H)$ such that $v$ is divisible by $\lcm_{\mathbf{a}\in[\ell]^m}\big(f_{\mathbf{a}}(u_{\psi_{\mathbf{a}}})\big)$. By Lemma \ref{parti} that are $k\in [m]$ and vectors $\mathbf{a}_1, \ldots, \mathbf{a}_{\ell}\in [\ell]^m$ such that $\psi_{\mathbf{a}_j}(0)=k$ and the $k$th component of $\mathbf{a}_j$ is $j$. Then $f_{\mathbf{a}_j}(u_{\psi_{\mathbf{a}_j}})$ is divisible by $x_{jk}$. As a consequence, $v$ is divisible by$$x_{1k}x_{2k}\cdots x_{\ell k}x_{\ell+1\psi_{\mathbf{a}_1}(\ell+1)}\cdots x_{n\psi_{\mathbf{a}_1}(n)}\in I_{G\rightarrow H},$$and we are done. 
Therefore,$$I_{G\rightarrow H}=\bigcap_{\mathbf{a}\in [\ell]^m}f_{\mathbf{a}}(I_{G'\rightarrow H}).$$Consequently,$$I_{G\rightarrow H}=\bigcap_{\mathbf{a}\in [\ell]^m}f_{\mathbf{a}}(I_{G'\rightarrow H})=\bigcap_{\mathbf{a}\in [\ell]^m}\bigcap_{i=1}^tf_{\mathbf{a}}(\mathfrak{p}_i)$$is a primary decomposition of $I_{G\rightarrow H}$. Since $\height f_{\mathbf{a}}(\mathfrak{p}_i)=\height \mathfrak{p}_i$, the assertion follows.  
\end{proof}

We are now ready to prove that $I_{G\to D_m}$ is an unmixed ideal.

\begin{Theorem} \label{unmixdir}
Let $G$ be a directed graph. Then $I_{G\to D_m}$ is unmixed. 
\end{Theorem}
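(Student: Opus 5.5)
We will reduce to the acyclic case and then use the known structure of ideals of poset homomorphisms. Since $D_m$ is acyclic, Lemma~\ref{shrinkunmix} applies. If $G$ has a directed cycle, we shrink it. Repeating finitely many times gives an acyclic directed graph $G_0$, possibly with parallel or antiparallel edges between the new vertex and others. By Lemma~\ref{shrinkunmix} applied at each step, it suffices to show that $I_{G_0\to D_m}$ is unmixed. Parallel edges $(i,0)$ and $(0,i)$ impose both $\phi(i)\le\phi(0)$ and $\phi(0)\le\phi(i)$, which forces $\phi(i)=\phi(0)$. Such a 2-cycle is itself a directed cycle, so we also shrink it. Here we need the (easy) observation that the proof of Lemma~\ref{shrinkunmix} only used $\ell\ge 2$ together with Lemma~\ref{dircyc}, which holds for 2-cycles as well. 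After all shrinkings we may therefore assume $G$ is acyclic, and for acyclic graphs parallel edges in the same direction are harmless.

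Now let $G$ be acyclic, with transitive closure $G^{\ast}$ and associated poset $P$ on $[n]$. As in the proof of Theorem~\ref{thm:directedlinres}, we have $I_{G\to D_m}=I_{G^{\ast}\to D_m}=L(P,C_m)$, where $C_m$ is the chain $1<\cdots<m$. Indeed, $\phi$ is a weak homomorphism to $D_m$ exactly when $\phi(i)\le\phi(j)$ for every edge $(i,j)$, and this propagates along directed paths. It therefore remains to show that $L(P,C_m)$ is unmixed of height $m$ (Proposition~\ref{htdir}).

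We would describe the minimal primes via a combinatorial criterion. A monomial prime $\mathfrak{p}=(x_{ij}:(i,j)\in T)$ contains $L(P,C_m)$ if and only if every order-preserving $\phi$ has some $i$ with $(i,\phi(i))\in T$. Equivalently, the complement $T^c\subseteq P\times C_m$ contains the graph of no order-preserving map. The known description of such minimal covers, from \cite{fgh} or via Alexander duality as in \cite{EHM,HQS}, is the following. The minimal primes correspond to ``staircase'' sets $T=\{(p_1,1),\dots\}$ determined by a sequence of elements $p_1,\dots,p_m\in P$ satisfying a suitable comparability pattern, and each contains exactly one generator $x_{p_jj}$ for each $j\in[m]$. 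This yields height exactly $m$.

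To prove this directly, let $\mathfrak{p}$ be minimal. For each $j$, the column $T_j=\{i:(i,j)\in T\}$ is nonempty by the constant maps. We would show that $|T_j|=1$. Suppose $i,i'\in T_j$. By minimality there exist order-preserving maps $\phi$ and $\phi'$ whose only elements of $T$ are $(i,j)$ and $(i',j)$, respectively. We then combine them through $\min$/$\max$ constructions. For order-preserving maps into a chain, $\min(\phi,\phi')$ and $\max(\phi,\phi')$ are again order-preserving. The aim is to produce an order-preserving map avoiding $T$ entirely, which is a contradiction. This lattice-closure argument is the main obstacle: one must choose the combination carefully, likely pointwise $\min$ on $\{x: x\ge i\}$-type up-sets mixed with $\phi'$, so that the resulting graph avoids $(i,j)$ and $(i',j)$ while introducing no new elements of $T$.

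If this step proves difficult, we will instead cite \cite{fgh}, which shows that $L(P,Q)$ has a linear resolution via Alexander duality, together with the fact that for a chain $Q$ the Alexander dual is generated in degree $m$. Both routes give that all minimal primes have height $m$. Finally, for a squarefree ideal, unmixedness means that all minimal primes have the same height, which completes the argument.
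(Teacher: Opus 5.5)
Your proposal follows the paper's proof: you repeatedly shrink directed cycles and apply Lemma~\ref{shrinkunmix} to reduce to the acyclic case, identify $I_{G\to D_m}$ with the poset-homomorphism ideal $L(P,C_m)$, and invoke \cite{fgh}; your remark that Lemmas~\ref{dircyc} and~\ref{shrinkunmix} also work for the $2$-cycles $(0,i),(i,0)$ created by shrinking is correct and covers a point the paper passes over. The unfinished $\min/\max$ argument is not needed, and the input you actually need from \cite{fgh} is that the Alexander dual of $L(P,C_m)$ is $L(C_m,P)$, all of whose generators have degree $m$; the linear resolution of $L(P,C_m)$ concerns Cohen--Macaulayness of the dual, not unmixedness of $L(P,C_m)$ itself.
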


\begin{proof}
If $G$ has no directed cycle, then $I_{G\rightarrow D_m}=I_{G^{\ast}\rightarrow D_m}$, where $G^{\ast}$ denotes the transitive closure of $G$. Moreover, there is a poset $P$ on $[n]$ such that $i<_P j$ if and only if $(i,j)\in E(G^{\ast})$. Thus, $I_{G\rightarrow D_m}$ is the ideal of poset homomorphisms defined by $P$ and and the chain on $[m]$. Therefore, it follows from \cite[Proposition 1.2]{fgh} that $I_{G\rightarrow D_m}$ is unmixed.

Now, suppose that $G$ has at least one directed cycle. 
By repeatedly shrinking the directed cycles of $G$, one obtains an acyclic graph $G''$. It follows from the preceding paragraph that $I_{G''\rightarrow D_m}$ is unmixed. Therefore, the Lemma \ref{shrinkunmix} implies that  $I_{G\rightarrow D_m}$ is unmixed.
\end{proof}

\begin{Definition}
Let $G$ be a directed graph. The simple acyclic directed graph obtained from $G$ by shrinking directed cycles and taking transitive closure is the following graph. First we repeat the procedure of shrinking the directed cycles until the resulting graph is a simple acyclic directed one. Then we take the transitive closure of the new graph.
\end{Definition}

In the following Theorem, we compute the projective dimension of $I_{G\to D_m}$.

\begin{Theorem} \label{pddirected}
Let $G$ be any directed graph. Then $\projdim I_{G\rightarrow D_m} = \alpha (G^*)(m-1)$, where $G^*$ is the simple acyclic directed graph obtained from $G$ by shrinking directed cycles and taking transitive closure.
\end{Theorem}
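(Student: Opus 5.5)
Reduce to the poset case by showing that shrinking a directed cycle does not change projective dimension, and then use the known projective-dimension formula for ideals of poset homomorphisms into a chain.

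\textbf{Step 1: one shrinking step.} Let $C$ be a directed cycle of $G$ and $G'$ the graph obtained by shrinking $C$ as in Definition~\ref{defshrink}. We claim that $\projdim I_{G\to D_m}=\projdim I_{G'\to D_m}$. By Lemma~\ref{dircyc}, every $\phi\in\WHom(G,D_m)$ is constant on $V(C)$. Moreover, $\phi\mapsto\phi'$, with $\phi'(0)$ the common value on $V(C)$, is a bijection $\WHom(G,D_m)\to\WHom(G',D_m)$. Under this bijection $u_\phi$ is obtained from $u_{\phi'}$ by replacing $x_{0b}$ with $x_{1b}x_{2b}\cdots x_{\ell b}$. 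This substitution respects divisibility and lcm's, since the variables $x_{1b},\dots,x_{\ell b}$ always occur together, with the same exponent, in every generator and in every lcm of generators. Hence $\cG(I_{G\to D_m})$ and $\cG(I_{G'\to D_m})$ have isomorphic LCM lattices. By \cite[Theorem 2.1]{gpw}, the two ideals have the same projective dimension. (Equivalently, one can view it as a polarization-type substitution, which preserves graded Betti numbers up to shifts.) The multigraph issue (both $(0,i)$ and $(i,0)$ present) is harmless, because Lemma~\ref{dircyc} still applies to the resulting 2-cycles; these are again shrunk, and iterating gives $G''$ acyclic and simple.

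\textbf{Step 2: transitive closure.} For $G''$ acyclic, as in the proof of Theorem~\ref{thm:directedlinres}, we have $I_{G''\to D_m}=I_{G^*\to D_m}$. This is the ideal $L(P,C_m)$ of poset homomorphisms from the poset $P$ with $i<_Pj$ iff $(i,j)\in E(G^*)$ to the chain $C_m$ on $[m]$. Since $D_m$ is transitively closed and strict order is preserved along edges, weak homomorphisms are exactly order-preserving maps.

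\textbf{Step 3: compute $\projdim L(P,C_m)$.} Independent sets of $G^*$ are exactly antichains of $P$, so $\alpha(G^*)$ is the width of $P$, and we need $\projdim L(P,C_m)=\mathrm{width}(P)(m-1)$. I would first look for this in the literature on generalized Hibi ideals (\cite{fgh, EHM, kkm}). The natural route is through Alexander duality: $L(P,C_m)^\vee$ is Cohen–Macaulay-type and is identified with a Hibi-type ideal of $P\times C_{m-1}$. Then
\[\projdim L(P,C_m)=\reg\bigl(S/L(P,C_m)^\vee\bigr),\]
and this regularity equals a rank statistic, namely the maximal size of an antichain-type configuration in $P\times[m-1]$, which gives $\mathrm{width}(P)(m-1)$. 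Alternatively, $L(P,C_m)$ has linear quotients (Theorem~\ref{thm:directedlinres} via weak polymatroidality). So its projective dimension is the maximal number of variables generating a colon ideal $(u_1,\dots,u_{i-1}):u_i$. For $u_\phi$ the colon generators are the $x_{p,\phi(p)-1}$ with $p$ maximal in the fibre where lowering is allowed, which again yields at most $(m-1)$ per element of an antichain. Equality is achieved by a suitably chosen $\phi$ whose lowerable elements form a maximal antichain repeated across levels.

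The main obstacle is Step 3, pinning down the colon ideals (or the dual regularity) precisely enough to get both the upper bound $\alpha(G^*)(m-1)$ and a generator attaining it. Steps 1–2 are routine once Lemma~\ref{dircyc} is in hand.
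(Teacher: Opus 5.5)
\textbf{Gap: Step 3 is not established.} Your Steps 1 and 2 coincide with the paper's proof. The paper also uses Lemma~\ref{dircyc} to see that shrinking a cycle leaves the LCM lattice unchanged, applies \cite[Theorem 2.1]{gpw}, and identifies $I_{G''\to D_m}=I_{G^*\to D_m}$ with the ideal of poset homomorphisms from $P$ to the chain $[m]$. The gap is Step 3, which is where the value $\alpha(G^*)(m-1)$ comes from, and you leave it open. The paper closes it by citing \cite[Theorem 4.3]{kkm}, namely $\projdim L(P,[m])=\mathrm{width}(P)(m-1)$. You list \cite{kkm} as a place to look but never invoke this result, and both of your substitute sketches fail as written:
\begin{enumerate}
\item \emph{Duality route.} The Alexander dual of $L(P,[m])$ is the ideal of multichains $p_1\le\cdots\le p_m$ in $P$. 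It is Cohen--Macaulay because $L(P,[m])$ has a linear resolution; it is not a Hibi ideal of $P\times[m-1]$. No width-type invariant of $P\times[m-1]$ can give the answer: for $P$ a single point, $L(P,[m])=(x_{11},\ldots,x_{1m})$ has projective dimension $m-1$, while $P\times[m-1]$ is a chain.
\item \emph{Linear-quotients route.} Taking one variable $x_{p,\phi(p)-1}$ per lowerable element can never produce $\mathrm{width}(P)(m-1)$ colon generators. In the same one-point example the last colon ideal is $(x_{11},\ldots,x_{1,m-1})$. Also, ``$(m-1)$ per element of an antichain'' is not a bound you have an argument for; the correct bound is $(m-1)$ per chain of a chain partition.
\end{enumerate}

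\textbf{How to repair it.} Besides citing \cite[Theorem 4.3]{kkm}, you can make the linear-quotients route work as follows.
\begin{enumerate}
\item Order the generators $u_\phi$ by increasing $\sum_p\phi(p)$. If $\psi\prec\phi$, then $\psi\not\ge\phi$ pointwise. Choose $p$ minimal in $P$ with $\psi(p)<\phi(p)$. For $q<_Pp$ this gives $\phi(q)\le\psi(q)\le\psi(p)$, so replacing $\phi(p)$ by $\psi(p)$ yields an order-preserving map with smaller sum.
\item Hence the colon ideal of $u_\phi$ is generated exactly by the variables $x_{p,j}$ with $M_p\le j<\phi(p)$, where $M_p=\max(\{1\}\cup\{\phi(q):q<_Pp\})$.
\item Upper bound: along any chain $c_1<\cdots<c_k$, the contributions $\max(0,\phi(c_i)-M_{c_i})$ are at most $\phi(c_1)-1$ and $\phi(c_i)-\phi(c_{i-1})$, so they telescope to at most $\phi(c_k)-1\le m-1$. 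By Dilworth's theorem, every colon ideal has at most $\mathrm{width}(P)(m-1)$ generators.
\item Lower bound: for a maximum antichain $A$, let $\phi$ be $1$ on elements strictly below $A$ and $m$ elsewhere. Then each $a\in A$ contributes exactly $m-1$ colon variables.
\item Since the ideal is generated in one degree and has linear quotients, its projective dimension is the maximal number of colon generators. This gives $\mathrm{width}(P)(m-1)=\alpha(G^*)(m-1)$.
\end{enumerate}
Without either this argument or the citation, your proposal only reduces the theorem to the acyclic case.
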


\begin{proof}
If $G$ has no directed cycle, then $I_{G\rightarrow D_m}=I_{G^{\ast}\rightarrow D_m}$, where $G^{\ast}$ denotes the transitive closure of $G$. Moreover, there is a poset $P$ on $[n]$ such that $i<_P j$ if and only if $(i,j)\in E(G^{\ast})$. Thus, $I_{G\rightarrow D_m}$ is the ideal of poset homomorphisms defined by $P$ and and the chain on $[m]$. Therefore, the assertion follows from \cite[Theorem 4.3]{kkm}.

Now, suppose that $G$ has a directed cycle, say $C$, as a subgraph. Without loss of generality, we may assume that $V(C)=[\ell]$, for some integer $\ell$ with $3\leq\ell\leq n$. and$$E(C)=\{(1,2), (2,3), \ldots, (\ell-1,\ell), (\ell,1)\}.$$It follows from Lemma \ref{dircyc} that for each weak homomorphism $\phi\in \WHom(G,D_n)$, one has $\phi(1)=\phi(2)=\cdots =\phi(\ell)$. Let $G'$ denote the (not necessarily simple) directed graph obtained from $G$ by shrinking the cycle $C$. Note that $I_{G\rightarrow D_m}$ and $I_{G'\rightarrow D_m}$ have the same LCM lattice. Hence, it follows from \cite[Theorem 2.1]{gpw} that $\projdim I_{G\rightarrow D_m}=\projdim I_{G'\rightarrow D_m}$. By repeatedly shrinking the directed cycles of $G$, one obtains an acyclic graph $G''$. Hence, it follows from the first paragraph of the proof that $\projdim I_{G\rightarrow D_m} = \alpha (G^*)(m-1)$.
\end{proof}

As a consequence of Theorem \ref{pddirected}, we are able to characterize all directed graphs $G$ for which $I_{G\to D_m}$ is Cohen-Macaulay.

\begin{Corollary} \label{CMdir}
Let $G$ be a directed graph on $n$ vertices. Then for an integer $m\geq 2$, the ideal $I_{G\rightarrow D_m}$ is Cohen-Macaulay if and only if up to a relabeling of vertices, $G^*=D_r$ for some $r$, where $G^*$ is obtained by shrinking directed cycles and taking transitive closure.
\end{Corollary}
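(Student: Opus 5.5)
We compare the projective dimension from Theorem~\ref{pddirected} with the codimension. By Proposition~\ref{htdir}, $\height I_{G\to D_m}=m$. The ring $S$ has $nm$ variables, so $\dim S/I_{G\to D_m}=nm-m$. By Auslander--Buchsbaum,
\[
\projdim S/I_{G\to D_m}=\projdim I_{G\to D_m}+1=\alpha(G^*)(m-1)+1 .
\]
Since $\height I\le \projdim S/I$ always holds, Cohen--Macaulayness is equivalent to
\[
\alpha(G^*)(m-1)+1=m .
\]
This is the key equation.

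Because $m\geq 2$, the equation $\alpha(G^*)(m-1)+1=m$ forces $\alpha(G^*)=1$. The combinatorial step is then to show that $\alpha(G^*)=1$ holds if and only if $G^*\cong D_r$ for some $r$.

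\emph{Forward direction.} If $\alpha(G^*)=1$, every two distinct vertices of $G^*$ are joined by an edge, so the underlying graph of $G^*$ is complete. Since $G^*$ is simple and acyclic, it is an acyclic complete graph. By Lemma~\ref{compacy}, after relabeling we get $G^*=D_r$, where $r=|V(G^*)|$.

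\emph{Converse.} $D_r$ is complete, so $\alpha(D_r)=1$. Then $\projdim I_{G\to D_m}=m-1$, which equals $\height I_{G\to D_m}-1$. So $\projdim S/I=\height I=\dim S-\dim S/I$, and $S/I$ is Cohen--Macaulay.

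The one point needing care is the convention behind $\alpha(G^*)$. The independence number must be computed on the simple acyclic graph $G^*$ obtained after shrinking and transitive closure, exactly as in Theorem~\ref{pddirected}. Any parallel edges produced by shrinking must already have been resolved into a simple graph, so that Lemma~\ref{compacy} applies. I expect no further obstacle: Theorem~\ref{pddirected} carries all the homological content, and the rest is the arithmetic above together with Lemma~\ref{compacy}.
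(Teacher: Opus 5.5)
Your proof is correct and follows the paper's argument. You compare $\height I_{G\to D_m}=m$ (Proposition \ref{htdir}) with $\projdim S/I_{G\to D_m}=\alpha(G^*)(m-1)+1$ (Theorem \ref{pddirected}), use $m\geq 2$ to force $\alpha(G^*)=1$, and then identify such $G^*$ with $D_r$ via Lemma \ref{compacy}. One small attribution slip: $\projdim S/I=\projdim I+1$ is just the shift of the minimal resolution, whereas Auslander--Buchsbaum is what gives the equivalence between Cohen--Macaulayness and $\projdim S/I=\height I$.
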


\begin{proof}
We know that $I_{G\rightarrow D_m}$ is Cohen-Macaulay if and only if $\height I_{G\rightarrow D_m}=\projdim S/I_{G\rightarrow D_m}$. It follows from Proposition \ref{htdir} and Theorem \ref{pddirected} that $I_{G\rightarrow D_m}$ is Cohen-Macaulay if and only if $\alpha(G^*)=1$. Since $G^*$ has no directed cycle, it follows from Lemma \ref{compacy} that $\alpha(G^*)=1$ if and only if up to a relabeling of vertices, $G^*=D_r$, for some $r$.
\end{proof}

The following theorem is the last result of this paper and shows that if $I_{G\to H}$ is unmixed, then the depth function of its symbolic powers is nonincreasing.

\begin{Theorem} \label{depthdecr}
Let $G$ and $H$ be directed graphs such that $I:=I_{G\rightarrow H}$ is unmixed. Then$$\depth S/I\geq \depth S/I^{(2)}\geq \depth S/I^{(3)}\geq \cdots.$$In particular, for each directed graph $G$, $$\depth S/I_{G\rightarrow D_m}\geq \depth S/I_{G\rightarrow D_m}^{(2)}\geq \depth S/I_{G\rightarrow D_m}^{(3)}\geq \cdots.$$
\end{Theorem}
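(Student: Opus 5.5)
The plan is to use the known criterion for nonincreasing depth of symbolic powers of squarefree monomial ideals whose minimal primes all have the same height. Two results are relevant. First, for a squarefree monomial ideal $I$, Hoa--Kimura--Terai--Trung show that if $I$ is unmixed (all minimal primes of the same height) and $\depth S/I^{(k)}$... Second, the cleaner route: Seyed Fakhari (and also Nguyen--Trung) prove that for every squarefree monomial ideal $I$ and every $k\geq 1$ there is a monomial $u$ with $I^{(k)}:u$ related to $I^{(k+1)}$ via polarization. Concretely, I would show directly that for each $k\geq1$ there is a monomial $u$ of degree equal to the common height $h=|V(H)|$ with
$$I^{(k+1)}:u=I^{(k)}.$$
Once this is established, the standard fact $\depth S/(J:u)\geq \depth S/J$ for a monomial ideal $J$ and a monomial $u$ (e.g.\ \cite[Corollary 1.3]{} style results by Rauf) gives $\depth S/I^{(k)}\geq\depth S/I^{(k+1)}$.

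To find $u$, write $I=\mathfrak{p}_1\cap\cdots\cap\mathfrak{p}_t$, where by unmixedness and Proposition \ref{htdir} every $\mathfrak{p}_i$ has height $m=|V(H)|$. By Proposition \ref{htdir}, the proof shows that each $\mathfrak{p}_i\supseteq (x_{1j}x_{2j}\cdots x_{nj}: j\in[m])$. Hence each minimal prime contains, for every $j\in[m]$, at least one variable $x_{ij}$. Since its height is exactly $m$, it contains \emph{exactly one} variable from each column set $\{x_{1j},\dots,x_{nj}\}$. Now take
$$u=\prod_{i\in[n]}\prod_{j\in[m]}x_{ij}.$$
For a prime $\mathfrak{p}$ generated by variables, we have $\mathfrak{p}^{k+1}:u=\mathfrak{p}^{k+1-d}$, where $d$ is the number of variables of $\mathfrak{p}$ dividing $u$. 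In this case $d=\height\mathfrak{p}=m$, since $u$ is divisible by all variables. That yields a shift by $m$, not by $1$. So $u$ must instead be chosen to hit each minimal prime in exactly one variable. Here I would use the column structure: a natural candidate is $u=x_{1j}x_{2j}\cdots x_{nj}$ for a fixed $j$. Since each $\mathfrak{p}_i$ contains exactly one variable from column $j$, we get $\mathfrak{p}_i^{k+1}:u=\mathfrak{p}_i^{k}$ for every $i$. Colon commutes with intersection, so $I^{(k+1)}:u=\bigcap_i\mathfrak{p}_i^{k}=I^{(k)}$.

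The main point to verify carefully is the claim that each minimal prime contains exactly one variable $x_{ij}$ for fixed $j$. Proposition \ref{htdir}'s complete intersection shows it contains at least one per column. The $m$ columns are disjoint, so a prime of height $m$ containing at least one from each has exactly one from each. That is where unmixedness is used.

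Finally, I would invoke the depth inequality for colon by a monomial. Since $I^{(k+1)}:u=I^{(k)}$, we get $\depth S/I^{(k)}\geq \depth S/I^{(k+1)}$ (Rauf's lemma: $\depth S/(J:u)\geq\depth S/J$ for monomial $J$). The "in particular" part then follows from Theorem \ref{unmixdir}, which says $I_{G\to D_m}$ is unmixed. I expect no serious obstacle beyond citing the colon-depth inequality correctly.
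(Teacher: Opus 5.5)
Your argument is correct and is essentially the paper's proof. Unmixedness, together with the complete intersection $(x_{1j}x_{2j}\cdots x_{nj} : j\in[m])\subseteq I$, forces each minimal prime to contain exactly one variable from each column $\{x_{1j},\dots,x_{nj}\}$. Hence $I^{(k+1)}:x_{1j}\cdots x_{nj}=I^{(k)}$, and Rauf's colon--depth inequality gives the monotonicity, with the $D_m$ case following from Theorem~\ref{unmixdir}; the exploratory detours in your first paragraph can simply be dropped.
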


\begin{proof}
Fix an integer $k\geq 2$. We show that $\depth S/I^{(k)}\geq \depth S/I^{(k-1)}$. Assume that $V(G)=[n]$ and $V(H)=[m]$. Let$$I=\mathfrak{p}_1\cap\mathfrak{p}_2\cap\cdots\cap\mathfrak{p}_t$$be the irredundant primary decomposition of $I$, where $\mathfrak{p}_1, \ldots, \mathfrak{p}_t$ are monomial prime ideals of $S$. Since $I$ is unmixed, it follows from Proposition \ref{htdir} that $\height \mathfrak{p}_i=m$, for each $i=1, \ldots, t$. Moreover, as the complete intersection$$(x_{11}x_{21}\cdots x_{n1},x_{12}x_{22}\cdots x_{n2}, \ldots, x_{1m}x_{2m}\cdots x_{nm})$$is contained in $I$, it follows that for each $i=1, \ldots, t$ and for each $j=1,, \ldots, m$,$$|\mathfrak{p}_i\cap \{x_{1j},x_{2j}, \ldots, x_{nj}\}|\geq 1.$$As $\height\mathfrak{p}_i=m$, we deduce that equality holds in the above inequality. As a consequence,
\begin{align*}
 (I^{(k)}: x_{1j}x_{2j}\cdots x_{nj})=\Big(\bigcap_{i=1}^t\mathfrak{p}_i^k:x_{1j}x_{2j}\cdots x_{nj}\Big)=\bigcap_{i=1}^t\big(\mathfrak{p}_i^k:x_{1j}x_{2j}\cdots x_{nj})=\bigcap_{i=1}^t\mathfrak{p_i}^{k-1}=I^{(k-1)}.
\end{align*}
The assertion now follows from \cite[Corollary 1.3]{R}. The last part of the theorem is a consequence of Theorem \ref{unmixdir} and the first part.
\end{proof}

\begin{footnotesize}
{\bf Acknowledgments.} Computational experiments carried out using the computer algebra system Macaulay2~\cite{M2} have yielded several valuable insights. The third author is supported by a FAPA grant from Universidad de los Andes. The first and the second authors were supported by Scientific and Technological Research Council of Turkey T\"UB\.{I}TAK under the Grant No: 124F113. The first author is a member of GNSAGA Indam and he acknowledges their support.

{\bf Declaration of competing interest.}  The authors declare that they have no known competing financial interests or personal relationships that could have appeared to influence the work reported in this paper.\\

{\bf Data availability.} No data was used for the research described in the article.  
\end{footnotesize}

\end{document}